\theoremstyle{plain}
\newtheorem{theorem}{Theorem}[section]
\newtheorem{corollary}[theorem]{Corollary}
\newtheorem{prop}[theorem]{Proposition}
\newtheorem{lemma}[theorem]{Lemma}
\theoremstyle{definition}
\newtheorem{remark}[theorem]{Remark}
\newtheorem{example}[theorem]{Example}
\newtheorem{examples}[theorem]{Examples}
\newtheorem{problem}[theorem]{Problem}
\DeclareMathOperator{\e}{e}
\newcommand{\C}{\mathbb{C}}
\newcommand{\R}{\mathbb{R}}
\newcommand{\K}{\mathbb{K}}
\newcommand{\N}{\mathbb{N}}
\newcommand{\eps}{\varepsilon}
 \DeclareMathOperator{\Id}{Id}
 \DeclareMathOperator{\linspan}{span}
\newcommand{\conv}{\mathrm{conv}}
\newcommand{\cconv}{\overline{\mathrm{conv}}}
 \renewcommand{\leq}{\leqslant}
\renewcommand{\geq}{\geqslant}
\begin{document}

\title[Numerical index of absolute sums of Banach spaces]
{Numerical index of absolute sums of Banach spaces}

\author[Mart\'{\i}n]{Miguel Mart\'{\i}n}
\author[Mer\'{\i}]{Javier Mer\'{\i}}

\address[Mart\'{\i}n \& Mer\'{\i}]{Departamento de An\'{a}lisis Matematico\\
Facultad de Ciencias\\
Universidad de Granada\\
18071 Granada, Spain}

\email{\texttt{mmartins@ugr.es} \qquad \texttt{jmeri@ugr.es}}

\author[Popov]{Mikhail Popov}

\address[Popov]{Department of Mathematics\\
Chernivtsi National University\\ str. Kotsjubyn'skogo 2,
Chernivtsi, 58012 Ukraine}

\email{\texttt{misham.popov@gmail.com}}

\author[Randrianantoanina]{Beata Randrianantoanina}

\address[Randrianantoanina]{Department of Mathematics
\\ Miami University \\ Oxford, OH 45056, USA}

 \email{\texttt{randrib@muohio.edu}}

\date{March 16th, 2010}

\thanks{First and second authors partially supported by Spanish
MICINN and FEDER project no.\ MTM2009-07498 and Junta de Andaluc\'{\i}a
and FEDER grants G09-FQM-185, P06-FQM-01438 and P09-FQM-4911.
Third author supported by Junta de Andaluc\'{\i}a and FEDER grant
P06-FQM-01438 and by Ukr.\ Derzh.\ Tema N 0103Y001103. Fourth author participant, NSF Workshop
in Linear Analysis and Probability, Texas A\&M University}

\keywords{Banach space; numerical index; $L_p$-space; absolute
sum; K\"{o}the space}

\subjclass[2010]{Primary 46B04. Secondary 46B20, 46E30, 47A12}

\begin{abstract}
We study the numerical index of absolute sums of Banach spaces,
giving general conditions which imply that the numerical index of
the sum is less or equal than the infimum of the numerical indices
of the summands and we provide some examples where the equality
holds covering the already known case of $c_0$-, $\ell_1$- and
$\ell_\infty$-sums and giving as a new result the case of $E$-sums
where $E$ has the RNP and $n(E)=1$ (in particular for
finite-dimensional $E$ with $n(E)=1$). We also show that the
numerical index of a Banach space $Z$ which contains a dense
increasing union of one-complemented subspaces is greater or equal
than the limit superior of the numerical indices of those
subspaces. Using  these results, we give a detailed short proof of
the already known fact that the numerical indices of all
infinite-dimensional $L_p(\mu)$-spaces coincide.
\end{abstract}

\maketitle

\section{Introduction}

Given a Banach space $X$, we write $B_X$, $S_X$ and $X^*$ to
denote its closed unit ball, its unit sphere and its topological
dual and define
$$
\Pi(X):=\bigl\{(x,x^*)\in S_X \times S_{X^*}\ :\  x^*(x)=1 \bigr\},
$$
and denote the Banach algebra of all (bounded linear) operators on
$X$  by $L(X)$. For an operator $T\in L(X)$, its \emph{numerical
radius} is defined as
$$
v(T):=\sup\{|x^*(Tx)| \ : \ (x,x^*)\in \Pi(X) \},
$$
which is a seminorm on $L(X)$ smaller than the operator norm. The
\emph{numerical index} of $X$ is the constant given by
\begin{align*}
n(X) & :=\inf\{v(T) \ : \ T\in L(X),\ \|T\|=1\}
= \max \{k\geq 0\ :\ k\,\|T\| \leq v(T)\ \forall T\in L(X)\}.
\end{align*}
The numerical radius of bounded linear operators on Banach
spaces was introduced, independently, by F.~Bauer and  G.~Lumer
in the 1960's extending the Hilbert space case from the 1910's.
The definition of numerical index appeared for the first time
in the 1970 paper \cite{D-Mc-P-W}, where the authors attributed
the authorship of the concept to G.~Lumer. Classical references
here are the monographs by F.~Bonsall and J.~Duncan
\cite{B-D1,B-D2} from the 1970's. The reader will find the
state-of-the-art on numerical indices in the survey paper
\cite{KaMaPa} and references therein. We refer to all these
references for background. Only newer results which are not
covered there will be explicitly referenced in this
introduction.

Let us present here the context necessary for the paper. First,
real and complex Banach spaces do not behave in the same way with
respect to numerical indices. In the real case, all values in
$[0,1]$ are possible for the numerical index. In the complex case,
$1/\e\leq n(X)\leq 1$ and all of these values are possible. There
are some classical Banach spaces for which the numerical index has
been calculated. For instance, the numerical index of $L_1(\mu)$
is $1$, and this property is shared by any of its isometric
preduals. In particular, $n\bigl(C(K)\bigr)=1$ for every compact
$K$. Also, $n(Y)=1$ for every finite-codimensional subspace $Y$ of
$C[0,1]$. If $H$ is a Hilbert space of dimension greater than one
then $n(H)=0$ in the real case and $n(H)=1/2$ in the complex case.
The exact value of the numerical indices of $L_p(\mu)$ spaces is
still unknown when $1<p<\infty$ and $p\neq 2$, but it is is known
\cite{Eddari,Eddari-Khamsi} that all infinite-dimensional
$L_p(\mu)$ spaces have the same numerical index, which coincides
with the infimum of the numerical indices of finite-dimensional
$L_p(\mu)$ spaces, and the result has been extended to
vector-valued $L_p$ spaces \cite{Eddari-Khamsi-Aksoy}. It has been
shown very recently \cite{MarMerPop} that every real $L_p(\mu)$
space has positive numerical index for $p\neq 2$. Some known
results about absolute sums of Banach spaces and about
vector-valued function spaces are the following. The numerical
index of the $c_0$-, $\ell_1$- or $\ell_\infty$-sum of a family of
Banach spaces coincides with the infimum of the numerical indices
of the elements of the family, while the numerical index of the
$\ell_p$-sum is only smaller or equal than the infimum. For a
Banach space $X$, it is known that, among others, the following
spaces have the same numerical index as $X$: $C(K,X)$,
$L_1(\mu,X)$, $L_\infty(\mu,X)$.

Our main goal in this paper is to study the numerical index of
absolute sums of Banach spaces. Given a nonempty set $\Lambda$
and a linear subspace $E$ of $\R^\Lambda$ with absolute norm
(see Section~\ref{sec:absolute-sums} for the exact definition),
we may define the $E$-sum of a family of Banach spaces indexed
in $\Lambda$. We give very general conditions on the space $E$
to assure that the numerical index of an $E$-sum of a family of
Banach spaces is smaller or equal than the infimum of the
numerical indices of the elements of the family. It covers the
already known case of $\ell_p$-sums ($1\leq p \leq \infty$) and
also the case when $E$ is a Banach space with a
one-unconditional basis. On the other hand, we give a condition
on $E$ to get that the numerical index of an $E$-sum of a
family of Banach spaces is equal to the infimum of the
numerical indices of the elements of the family. As a
consequence, we obtain the already known result for $c_0$-,
$\ell_1$- and $\ell_\infty$-sums with a unified approach and
also the case of $E$-sums when $E$ has the Radon-Nikod\'{y}m
property (RNP in short) and $n(E)=1$. In particular, the
numerical index of a finite $E$-sum of Banach spaces is equal
to the minimum of the numerical indices of the summands when
$n(E)=1$.

Besides of the above results, we discuss in
Section~\ref{sec:subspaces-examples} when the numerical index
of a Banach space is smaller than the numerical index of its
one-complemented subspaces, and we give examples showing that
this is not always the case for unconditional subspaces. We
show in Section~\ref{sec:kothe} sufficient conditions on a
K\"{o}the space $E$ to ensure that $n\bigl(E(X)\bigr)\leq n(X)$ for
every Banach space $X$. These conditions cover the already
known cases of $E=L_p(\mu)$ ($1\leq p \leq \infty$) with a
unified approach but they also give the case of an
order-continuous K\"{o}the space $E$. In Section~\ref{sec:union} it
is shown that the numerical index of a Banach space which
contains a dense increasing union of one-complemented subspaces
is greater or equal than the limit superior of the numerical
indices of those subspaces. As a consequence, if a Banach space
has a monotone basis, its numerical index is greater or equal
than the limit superior of the numerical indices of the ranges
of the projections associated to the basis.

Finally, in Section~\ref{sec:Lp} we deduce from the results of the
previous sections the already known result
\cite{Eddari,Eddari-Khamsi,Eddari-Khamsi-Aksoy} that for every
positive measure $\mu$ such that $L_p(\mu)$ is
infinite-dimensional and every Banach space $X$,
$n\bigl(L_p(\mu,X)\bigr)= n\bigl(\ell_p(X)\bigr)= \inf_{m\in\N}
n\bigl(\ell_p^m(X)\bigr)$. In our opinion, the abstract vision we
are developing in this paper allows to understand better the
properties of $L_p$-spaces underlying the proofs: $\ell_p$-sums
are absolute sums, $L_p$-norms are associative, every measure
space can be decomposed into parts of finite measure, every finite
measure algebra is isomorphic to the union of homogeneous measure
algebras (Maharam's theorem) and, finally, the density of simple
functions via the conditional expectation projections.

We recall that given a measure space $(\Omega,\Sigma,\mu)$ and a
Banach space $X$, $L_p(\mu,X)$ denotes the Banach space of
(equivalent classes of) Bochner-measurable functions from $\Omega$
into $X$. Let us observe that we may suppose that the measure
$\mu$ is complete since every positive measure and its completion
provide the same vector-valued $L_p$-spaces. When $\Omega$ has $m$
elements and $\mu$ is the counting measure, we write
$\ell_p^m(X)$. When $\Omega$ is an infinite countable set and
$\mu$ is the counting measure, we write $\ell_p(X)$. We will write
$X\oplus_p Y$ to denote the $\ell_p$-sum of two Banach spaces $X$
and $Y$.

We finish the introduction with the following result from
\cite{B-D1} which allows to calculate numerical radii of operators
using a dense subset of the unit sphere and one supporting
functional for each of the points of this dense subset, and which
we will use along the paper.

\begin{lemma}[\textrm{\cite[Theorem~9.3]{B-D1}}]\label{lemma:Theorem93}
Let $X$ be a Banach space, let $\Gamma$ be subset of $\Pi(X)$ such
that the projection on the first coordinate is dense in $S_X$.
Then
$$
v(T)=\sup\{|x^*(Tx)|\ : \ (x,x^*)\in \Gamma\}
$$
for every $T\in L(X)$.
\end{lemma}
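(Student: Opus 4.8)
The plan is to establish, for a fixed $T\in L(X)$, the two inequalities
$$
\sup\{|x^*(Tx)|:(x,x^*)\in\Gamma\}\ \leq\ v(T)\qquad\text{and}\qquad v(T)\ \leq\ \sup\{|x^*(Tx)|:(x,x^*)\in\Gamma\}
$$
separately; the first is immediate and the second carries all the content. For the first inequality there is nothing to do: since $\Gamma\subseteq\Pi(X)$ by hypothesis, each number $|x^*(Tx)|$ with $(x,x^*)\in\Gamma$ already occurs in the supremum defining $v(T)$.

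For the second inequality, write $\Gamma_1$ for the norm-dense projection of $\Gamma$ onto $S_X$, put $w:=\sup\{|x^*(Tx)|:(x,x^*)\in\Gamma\}$, fix $(y,y^*)\in\Pi(X)$ and $\eps>0$; it suffices to show $|y^*(Ty)|\leq w+\eps$. The natural first move is to choose $x\in\Gamma_1$ with $\|x-y\|$ as small as we wish, together with a functional $x^*$ such that $(x,x^*)\in\Gamma$, so that $|x^*(Tx)|\leq w$. The vector part is then harmless, since $|x^*(Tx)-x^*(Ty)|\leq\|T\|\,\|x-y\|$, and $x^*(x)=1$ forces $\re x^*(y)\geq 1-\|x-y\|$, so that $x^*$ nearly norms $y$.

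The difficulty — which I expect to be the main obstacle — is that the functional $x^*$ that $\Gamma$ attaches to $x$ need not be $w^*$-close to $y^*$, nor to any prescribed supporting functional of $y$, so $x^*(Ty)$ cannot be compared with $y^*(Ty)$ directly. To get around this I would first improve the choice of $y^*$: the map $g\mapsto|g(Ty)|$ is convex and $w^*$-continuous on the $w^*$-compact convex set $D(y):=\{g\in S_{X^*}:g(y)=1\}$, hence attains its maximum over $D(y)$ at an extreme point, so, at the cost of replacing $y^*$ (which only makes the desired inequality stronger), we may assume $|y^*(Ty)|=\max\{|g(Ty)|:g\in D(y)\}$ with $y^*\in\extr\bigl(D(y)\bigr)$. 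The crux is then to show that such an extreme supporting functional $y^*$ lies in the $w^*$-closure of $\{x^*:x\in\Gamma_1,\ \|x-y\|<\delta\}$ for every $\delta>0$: from the near-norming property above one checks that every $w^*$-cluster point of these functionals lies in $D(y)$, and then one aims to show, by a Hahn–Banach separation in $(X^*,w^*)$ exploiting the norm-density of $\Gamma_1$ in the \emph{whole} sphere, that the $w^*$-closed convex hull of this cluster set is all of $D(y)$, so that Milman's theorem forces $\extr\bigl(D(y)\bigr)$ inside it. Granting this, choosing $x\in\Gamma_1$ close enough to $y$ with attached $x^*$ that is $w^*$-close to $y^*$ gives $|x^*(Tx)|=|x^*(Ty)|+o(1)=|y^*(Ty)|+o(1)$, which is $\leq w$ up to $\eps$, and $v(T)\leq w$ follows. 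Since the statement is exactly \cite[Theorem~9.3]{B-D1}, one may alternatively simply invoke the perturbation machinery for spatial numerical ranges developed there, which packages precisely this functional-matching step.
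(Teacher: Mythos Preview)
The paper gives no proof of this lemma; it simply quotes \cite[Theorem~9.3]{B-D1}. So the only comparison available is with the classical Bonsall--Duncan argument.

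Your outline has a real gap at the step that carries all the weight. You reduce matters to the claim that the $w^*$-closed convex hull of the cluster set $C:=\bigcap_{\delta>0}\overline{\{x^*:(x,x^*)\in\Gamma,\ \|x-y\|<\delta\}}^{\,w^*}$ is all of $D(y)$ (so that Milman pushes $\extr\bigl(D(y)\bigr)$ into $C$), and you justify this only by ``a Hahn--Banach separation\ldots exploiting the norm-density of $\Gamma_1$ in the whole sphere''. That is not an argument: density of $\Gamma_1$ does not, by itself, force an \emph{arbitrary} selection $x\mapsto x^*\in D(x)$ to fill out $D(y)$ in the limit. What one actually needs to show is that for every direction $z\in X$ one has $\sup_{h\in C}\re h(z)\geq\max_{g\in D(y)}\re g(z)$; and the way to get this is to move from $y$ to $y_t=(y+tz)/\|y+tz\|$ for small $t>0$, choose $x_t\in\Gamma_1$ very close to $y_t$, and check that \emph{any} supporting functional at $x_t$ automatically satisfies $\re x_t^*(z)\geq\max_{g\in D(y)}\re g(z)-O(t)$.

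But that perturbation trick is exactly the core of the proof in \cite{B-D1}. Once you have it, the extreme-point and Milman machinery is superfluous: one shows directly that
\[
\sup_{(x,x^*)\in\Gamma}\re x^*(Tx)\ =\ \sup\re V(T)\ =\ \inf_{\alpha>0}\alpha^{-1}\bigl(\|I+\alpha T\|-1\bigr),
\]
the right-hand side depending only on norms and hence being computable over any dense subset of $S_X$ with any choice of supporting functionals; then $v(T)$ is recovered by replacing $T$ by $\e^{i\theta}T$ (or $\pm T$ in the real case). Your final sentence effectively concedes that the substantive content lives in \cite{B-D1}; the route you sketch can be completed, but only by importing that same idea, at which point the detour through $\extr\bigl(D(y)\bigr)$ adds nothing.
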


\section{Absolute sums of Banach spaces}\label{sec:absolute-sums}
Let $\Lambda$ be a nonempty set and let $E$ be a linear subspace
of $\R^\Lambda$. An \emph{absolute norm} on $E$ is a complete norm
$\|\cdot\|_E$ satisfying
\begin{itemize}
\item[(a)] Given $(a_\lambda), (b_\lambda)\in \R^\Lambda$ with
    $|a_\lambda| = |b_\lambda|$ for every $\lambda\in
    \Lambda$, if $(a_\lambda)\in E$, then $(b_\lambda)\in E$
    with $\|(a_\lambda)\|_E = \|(b_\lambda)\|_E$.
\item[(b)] For every $\lambda\in \Lambda$,
    $\chi_{\{\lambda\}}\in E$ with
    $\|\chi_{\{\lambda\}}\|_E=1$, where
    $\chi_{\{\lambda\}}$ is the characteristic function of
    the singlet $\{\lambda\}$.
\end{itemize}
The following results can be deduced from the definition above:
\begin{itemize}
\item[(c)] Given $(x_\lambda), (y_\lambda)\in \R^\Lambda$ with
    $|y_\lambda| \leq |x_\lambda|$ for every $\lambda\in
    \Lambda$, if $(x_\lambda)\in E$, then $(y_\lambda)\in E$
    with $\|(y_\lambda)\|_E \leq \|(x_\lambda)\|_E$.
\item[(d)] $\ell_1(\Lambda)\subseteq E \subseteq
    \ell_\infty(\Lambda)$ with contractive inclusions.
\end{itemize}
Observe that $E$ is a Banach lattice in the pointwise order
(actually, $E$ can be viewed as a K\"{o}the space on the measure
space $(\Lambda,\mathcal{P}(\Lambda),\nu)$ where $\nu$ is the
counting measure on $\Lambda$, which is non-necessarily
$\sigma$-finite, see Section~\ref{sec:kothe}). The \emph{K\"{o}the
dual} $E'$ of $E$ is the linear subspace of $\R^\Lambda$
defined by
$$
E'=\left\{(b_\lambda)\in \R^\Lambda\ : \ \|(b_\lambda)\|_{E'}:=
\sup_{(a_\lambda)\in B_E} \sum_{\lambda\in\Lambda}|b_\lambda||a_\lambda| <\infty \right\}.
$$
The norm $\|\cdot\|_{E'}$ on $E'$ is an absolute norm. Every
element $(b_\lambda)\in E'$ defines naturally a continuous
linear functional on $E$ by the formula
$$
(a_\lambda)\longmapsto \sum_{\lambda\in\Lambda}b_\lambda a_\lambda \qquad \bigl((a_\lambda)\in E\bigr),
$$
so we have $E'\subseteq E^*$ and this inclusion is isometric. We
say that $E$ is \emph{order continuous} if $0\leq x_\alpha
\downarrow 0$ and $x_\alpha\in E$ imply that $\lim \|x_\alpha\|=0$
(since $E$ is order complete, this is known to be equivalent to
the fact that $E$ does not contain an isomorphic copy of
$\ell_\infty$, see \cite{LTII}). If $E$ is order continuous, the
set of those functions with finite support is dense in $E$ and the
inclusion $E'\subseteq E^*$ is surjective (this is shown for K\"{o}the
spaces defined on a $\sigma$-finite space by using the
Radon-Nikod\'{y}m theorem; in the case we are studying here, the
measure spaces are not necessarily $\sigma$-finite, but since they
are discrete the proof of the fact that $E'=E^*$ is
straightforward).

Given an arbitrary family $\{X_\lambda\, : \, \lambda\in\Lambda\}$
of Banach spaces, the \emph{$E$-sum} of the family is the space
\begin{align*}
\Bigl[\bigoplus_{\lambda\in \Lambda} X_\lambda\Bigr]_{E} &:=
\Bigl\{(x_\lambda)\,:\, x_\lambda\in X_\lambda\ \forall \lambda\in \Lambda,\ (\|x_\lambda\|)\in E \Bigr\}
\\ & =\Bigl\{(a_\lambda x_\lambda)\,:\, a_\lambda\in \R^+_0,\ x_\lambda\in S_{X_\lambda}\ \forall \lambda\in \Lambda,
\ (a_\lambda)\in E \Bigr\}
\end{align*}
endowed with the complete norm
$\|(x_\lambda)\|=\|(\|x_\lambda\|)\|_E$. We will use the name
\emph{absolute sum} when the space $E$ is clear from the context.
Write $X=\Bigl[\bigoplus_{\lambda\in \Lambda}
X_\lambda\Bigr]_{E}$. For every $\kappa\in\Lambda$, we consider
the natural inclusion $I_\kappa:X_\kappa\longrightarrow X$ given
by $I_\kappa(x)=x\,\chi_{\{\kappa\}}$ for every $x\in X_\kappa$,
which is an isometric embedding, and the natural projection
$P_\kappa:X \longrightarrow X_\kappa$ given by
$P_\kappa\bigl((x_\lambda)\bigr)=x_\kappa$ for every
$(x_\lambda)\in X$, which is contractive. Clearly, $P_\kappa
I_\kappa=\Id_{X_\kappa}$. We write
$$
X'=\Bigl[\bigoplus_{\lambda\in \Lambda} X^*_\lambda\Bigr]_{E'}
$$
and observe that every element in $(x^*_\lambda)\in X'$ defines
naturally a continuous linear functional on $X$ by the formula
$$
(x_\lambda)\longmapsto \sum_{\lambda\in\Lambda}x^*_\lambda(x_\lambda) \qquad \bigl((x_\lambda)\in E\bigr),
$$
so we have $X'\subseteq X^*$ and this inclusion is isometric.

Examples of absolute sums are $c_0$-sums, $\ell_p$-sums for $1\leq
p\leq \infty$, i.e.\ given a nonempty set $\Lambda$, we are
considering $E=c_0(\Lambda)$ or $E=\ell_p(\Lambda)$. More examples
are the absolute sums produced using a Banach space $E$ with a
one-unconditional basis, finite (i.e.\ $E$ is $\R^m$ endowed with
an absolute norm) or infinite (i.e.\ $E$ is a Banach space with an
one-unconditional basis viewed as a linear subspace of $\R^\N$ via
the basis).

Our first main result gives an inequality between the numerical
index of an $E$-sum of Banach spaces and the infimum of the
numerical index of the summands, provided that $E'$ contains
sufficiently many norm-attaining functionals.

\begin{theorem}\label{thm:EMA-infinito}
Let $\Lambda$ be a non-empty set and let $E$ be a linear
subspace of $\R^\Lambda$ endowed with an absolute norm. Suppose
there is a dense subset $A\subseteq S_E$ such that for every
$(a_\lambda)\in A$, there exists $(b_\lambda)\in S_{E'}$
satisfying $\sum_{\lambda\in \Lambda} b_\lambda a_\lambda=1$.
Then, given an arbitrary family $\{X_\lambda\, : \,
\lambda\in\Lambda\}$ of Banach spaces,
$$
n\left(\Bigl[\bigoplus_{\lambda\in \Lambda} X_\lambda\Bigr]_{E}\right) \,
\leq \, \inf\bigl\{n(X_\lambda)\,:\,\lambda\in \Lambda\bigr\}.
$$
\end{theorem}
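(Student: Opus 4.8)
The plan is to fix one index $\kappa \in \Lambda$ and show $n(X) \leq n(X_\kappa)$, where $X = \bigl[\bigoplus_{\lambda} X_\lambda\bigr]_E$. Given an operator $S \in L(X_\kappa)$ with $\|S\|=1$, I would build an operator $T \in L(X)$ out of it whose norm is controlled below by $1$ (or close to it) and whose numerical radius is controlled above by $v(S)$ (or close to it); then $n(X)\|T\| \leq v(T)$ forces $n(X) \leq v(S)$ for all such $S$, hence $n(X) \leq n(X_\kappa)$.

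The natural candidate is $T = I_\kappa S P_\kappa$, i.e. $T\bigl((x_\lambda)\bigr) = \bigl(S x_\kappa\bigr)\chi_{\{\kappa\}}$. Since $I_\kappa$ is an isometric embedding and $P_\kappa$ is a norm-one projection with $P_\kappa I_\kappa = \Id_{X_\kappa}$, one gets $\|T\| = \|S\| = 1$ immediately. The work is in bounding $v(T)$. I would use Lemma~\ref{lemma:Theorem93}: it suffices to estimate $|\varphi(Tx)|$ over a set of pairs $(x,\varphi)\in \Pi(X)$ whose first coordinates are dense in $S_X$. For the dense set, take vectors $x = (x_\lambda)$ with $(\|x_\lambda\|) = (a_\lambda) \in A$, where $A$ is the hypothesized dense subset of $S_E$; such $x$ are dense in $S_X$. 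For the supporting functional, pick $(b_\lambda)\in S_{E'}$ with $\sum_\lambda b_\lambda a_\lambda = 1$, pick $x_\lambda^* \in S_{X_\lambda^*}$ with $x_\lambda^*(x_\lambda) = \|x_\lambda\| = a_\lambda$, and set $\varphi = (b_\lambda x_\lambda^*) \in X' \subseteq X^*$; then $\|\varphi\| \leq \|(b_\lambda)\|_{E'} = 1$ and $\varphi(x) = \sum_\lambda b_\lambda x_\lambda^*(x_\lambda) = \sum_\lambda b_\lambda a_\lambda = 1$, so $(x,\varphi) \in \Pi(X)$.

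Now compute $\varphi(Tx) = b_\kappa\, x_\kappa^*(S x_\kappa)$. If $a_\kappa = \|x_\kappa\| = 0$ this is $0$. Otherwise, write $x_\kappa = a_\kappa u$ with $u \in S_{X_\kappa}$; then $x_\kappa^*(x_\kappa) = a_\kappa$ gives $x_\kappa^*(u) = 1$, so $(u, x_\kappa^*) \in \Pi(X_\kappa)$ and hence $|x_\kappa^*(Su)| \leq v(S)$. Thus $|\varphi(Tx)| = |b_\kappa|\, a_\kappa\, |x_\kappa^*(Su)| \leq |b_\kappa|\, a_\kappa\, v(S) \leq v(S)$, the last step because $|b_\kappa| a_\kappa \leq \sum_\lambda |b_\lambda||a_\lambda| \leq \|(b_\lambda)\|_{E'}\|(a_\lambda)\|_E = 1$. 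Taking the supremum over the dense set and applying Lemma~\ref{lemma:Theorem93} yields $v(T) \leq v(S)$. Combined with $n(X)\|T\| \leq v(T)$ and $\|T\|=1$, we get $n(X) \leq v(S)$ for every norm-one $S \in L(X_\kappa)$, whence $n(X) \leq n(X_\kappa)$; taking the infimum over $\kappa$ finishes the proof.

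The only genuinely delicate point is making sure the family of pairs $(x,\varphi)$ constructed above really has first coordinates dense in $S_X$: one must check that $\{(x_\lambda)\in S_X : (\|x_\lambda\|)\in A\}$ is dense in $S_X$, which follows from density of $A$ in $S_E$ together with property~(c) of absolute norms (scaling each coordinate), and that the choices of $x_\lambda^*$ and $(b_\lambda)$ can be made for every point of this dense set — which is exactly what the hypothesis on $A$ provides. Everything else is the routine verification above; no measure theory or RNP is needed for this first inequality.
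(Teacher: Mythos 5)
Your proposal is correct and follows essentially the same route as the paper: the operator $T=I_\kappa S P_\kappa$, the dense set of points of $S_X$ whose coordinate norms lie in $A$, the supporting functionals $(b_\lambda x_\lambda^*)\in X'\subseteq X^*$, Lemma~\ref{lemma:Theorem93}, and the bound $|b_\kappa|\,a_\kappa\leq \|(b_\lambda)\|_{E'}\|(a_\lambda)\|_E=1$. The only cosmetic difference is that the paper parametrizes the dense set directly as $(a_\lambda x_\lambda)$ with $x_\lambda\in S_{X_\lambda}$, which avoids your separate treatment of the case $a_\kappa=0$.
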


\begin{proof}
Write $X=\Bigl[\bigoplus_{\lambda\in \Lambda} X_\lambda\Bigr]_{E}$
and $X'=\Bigl[\bigoplus_{\lambda\in \Lambda}
X^*_\lambda\Bigr]_{E'}\subseteq X^*$. Fix $\kappa\in \Lambda$. For
every $S\in L(X_\kappa)$, we define $T\in L(X)$ by $T=I_{\kappa} S
P_\kappa$. It then follows that $\|T\|\leq \|S\|$. Since $S=
P_\kappa T I_\kappa$, $\|S\|\leq \|T\|$ and so $\|T\|=\|S\|$.

We claim that $v(T)\leq v(S)$. Indeed, we consider the set
$\mathcal{A}\subseteq S_X$ given by
\begin{equation*}
\mathcal{A} = \left\{(a_\lambda x_\lambda)\ :\ a_\lambda\in \R^+_0,\,x_\lambda \in S_{X_\lambda}\,\forall \lambda\in\Lambda,\
               (a_\lambda)\in A\right\}
\end{equation*}
and for every $a=(a_\lambda x_\lambda)\in \mathcal{A}$, we write
$$
\Upsilon(a)=(b_\lambda x_\lambda^*)\in S_{X'}\subseteq S_{X^*}
$$
where $x_\lambda^*\in S_{X_\lambda^*}$ satisfies
$x_\lambda^*(x_\lambda)=1$ and $(b_\lambda)\in S_{E'}$ satisfies
$\sum_{\lambda\in \Lambda} b_\lambda a_\lambda=1$. The set
$\mathcal{A}$ is dense in $S_X$ and $[\Upsilon(a)](a)=1$ for every
$a\in\mathcal{A}$. It then follows from
Lemma~\ref{lemma:Theorem93} that
$$
v(T)=\sup\left\{|[\Upsilon(a)](T(a))|\,:\, a\in\mathcal{A} \right\}.
$$
For every $a\in \mathcal{A}$, we have
\begin{align*}
|[\Upsilon(a)](T(a))| = |[\Upsilon(a)](I_\kappa(S(a_\kappa x_\kappa)))|
    =b_\kappa a_\kappa |x_\kappa^*(S(x_\kappa))| \leq |x_\kappa^*(S(x_\kappa))| \leq v(S),
\end{align*}
where the last inequality follows from the fact that
$(x_\kappa,x_\kappa^*)\in \Pi(X_\kappa)$. Taking supremum with
$a\in\mathcal{A}$, we get $v(T)\leq v(S)$ as desired.

Now, we observe that
$$
v(S)\geq v(T)\geq n(X)\|T\|\geq n(X)\|S\|,
$$
and the arbitrariness of $S\in L(X_\kappa)$ gives us that
$n(X_\kappa)\geq n(X)$.
\end{proof}

Let us list here the main consequences of the above result.

Let $E$ be a linear subspace of $\R^\Lambda$ with an absolute
norm. If $E$ is order continuous, the hypotheses of
Theorem~\ref{thm:EMA-infinito} are trivially satisfied (since
$E^*=E'$). Again in this case, $E'$ is a linear subspace of
$\R^\Lambda$ with an absolute norm and the hypotheses of
Theorem~\ref{thm:EMA-infinito} are satisfied for $E'$ thanks to
the Bishop-Phelps theorem (the set of those norm-one elements in
$E'=E^*$ attaining the norm on $E\subset E''$ is dense in the unit
sphere of $E'$). Therefore, the following result follows.

\begin{corollary}\label{cor:leq-Eprima-F}
Let $\Lambda$ be a nonempty set and let $E$ be a linear subspace
of $\R^\Lambda$ with an absolute norm which is order continuous.
Then, given an arbitrary family $\{X_\lambda\, : \,
\lambda\in\Lambda\}$ of Banach spaces,
\begin{align*}
n\left(\Bigl[\bigoplus_{\lambda\in \Lambda}
X_\lambda\Bigr]_{E}\right)   \leq \,
\inf\bigl\{n(X_\lambda)\,:\,\lambda\in \Lambda\bigr\}, \qquad \text{and} \qquad
n\left(\Bigl[\bigoplus_{\lambda\in \Lambda}
X_\lambda\Bigr]_{E'}\right)  \leq \,
\inf\bigl\{n(X_\lambda)\,:\,\lambda\in \Lambda\bigr\}.
\end{align*}
\end{corollary}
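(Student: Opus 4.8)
The plan is to derive Corollary~\ref{cor:leq-Eprima-F} as a direct consequence of Theorem~\ref{thm:EMA-infinito}, applied twice: once to $E$ itself and once to its K\"othe dual $E'$. So the whole task reduces to checking, in each case, that the hypothesis of the theorem holds, namely that there is a dense subset $A$ of the unit sphere such that every element of $A$ is normed by a norm-one functional coming from the \emph{K\"othe} dual (not merely from the topological dual).

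For the first inequality, I would argue as follows. Since $E$ is order continuous, the discussion preceding the corollary gives $E^*=E'$ isometrically. Hence for \emph{every} $(a_\lambda)\in S_E$ one may take, via Hahn--Banach, a norm-one functional $(b_\lambda)\in S_{E^*}=S_{E'}$ with $\sum_{\lambda}b_\lambda a_\lambda=1$. Thus the hypothesis of Theorem~\ref{thm:EMA-infinito} is satisfied with $A=S_E$, and the theorem yields $n\bigl(\bigl[\bigoplus_\lambda X_\lambda\bigr]_E\bigr)\leq \inf_\lambda n(X_\lambda)$.

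For the second inequality, the point is to run the theorem with the space $E'$ in the role of $E$. First I would note that $E'$ is a linear subspace of $\R^\Lambda$ endowed with an absolute norm (this is recorded in Section~\ref{sec:absolute-sums}), so the theorem is applicable in principle; moreover the $E'$-sum $\bigl[\bigoplus_\lambda X^*_\lambda\bigr]_{E'}$ is exactly the space denoted $X'$ there, but here we feed the family $\{X_\lambda\}$ itself, which is legitimate since the conclusion only involves the numerical indices. What must be checked is the existence of a dense set $A\subseteq S_{E'}$ each of whose elements is normed by a norm-one element of the second K\"othe dual $E''$, viewed inside $(E')^*$. Since $E$ is order continuous, $E^*=E'$, so $E$ embeds isometrically in $E''=(E')^*$, and the natural image of $E$ in $(E')^*$ is precisely the subspace $E''\cap(E')^*$ that appears in the hypothesis. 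Now apply the Bishop--Phelps theorem in $E'$: the set of norm-one functionals on $E'$ that attain their norm is dense in $S_{(E')^*}$. Intersecting with the closure argument, one gets a dense subset $A$ of $S_{E'}$ such that each $(b_\lambda)\in A$ is normed by some $(a_\lambda)\in S_E\subseteq S_{E''}$, i.e.\ $\sum_\lambda b_\lambda a_\lambda=\|(b_\lambda)\|_{E'}=1$ with $(a_\lambda)\in S_{E'{}'}$. This is exactly the hypothesis of Theorem~\ref{thm:EMA-infinito} for $E'$, so the theorem gives $n\bigl(\bigl[\bigoplus_\lambda X_\lambda\bigr]_{E'}\bigr)\leq \inf_\lambda n(X_\lambda)$.

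The only delicate point is the second application: one has to make sure that the norming functionals produced by Bishop--Phelps in $E'$ lie in the \emph{K\"othe} double dual, i.e.\ come from genuine elements of $\R^\Lambda$ (elements of $E$), rather than merely abstract functionals in $(E')^*$. This is precisely where order continuity of $E$ is used, via $E^*=E'$, which forces $E\hookrightarrow E''$ isometrically with image contained in $(E')^*$; and density of the attaining functionals then follows from Bishop--Phelps applied to the dual pair $\langle E', E''\rangle$ with $E$ (hence $E''$) norming. Everything else is bookkeeping: identifying the $E'$-sum of $\{X^*_\lambda\}$ with $X'$, observing that absolute-norm axioms transfer to $E'$, and invoking Theorem~\ref{thm:EMA-infinito} verbatim.
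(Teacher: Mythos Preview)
Your overall plan coincides with the paper's: both inequalities are obtained by applying Theorem~\ref{thm:EMA-infinito}, once to $E$ and once to $E'$. The first application is handled correctly and matches the paper exactly.

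For the second application, however, your invocation of Bishop--Phelps is misdirected. You write ``apply the Bishop--Phelps theorem in $E'$: the set of norm-one functionals on $E'$ that attain their norm is dense in $S_{(E')^*}$'', but this gives you a dense set in $S_{(E')^*}$, not in $S_{E'}$, which is what Theorem~\ref{thm:EMA-infinito} requires. The phrase ``intersecting with the closure argument'' does not repair this, and invoking ``Bishop--Phelps for the dual pair $\langle E',E''\rangle$'' is not a standard statement. Relatedly, the equality $E''=(E')^*$ you write down is generally false (it would require $E'$ to be order continuous, which is not assumed).

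The paper's route---and the clean fix for your argument---is to apply Bishop--Phelps to $E$, not to $E'$. Since $E$ is order continuous, $E^*=E'$; hence the norm-attaining elements of $E^*$ form a dense subset $A$ of $S_{E'}$. By definition, each $(b_\lambda)\in A$ attains its norm at some $(a_\lambda)\in S_E$, and since $E\subseteq E''$ isometrically (because $E'=E^*$ norms $E$), this $(a_\lambda)$ lies in $S_{(E')'}$ with $\sum_\lambda a_\lambda b_\lambda=1$. That is exactly the hypothesis of Theorem~\ref{thm:EMA-infinito} for $E'$, with no appeal to any dual-pair version of Bishop--Phelps.
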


The spaces $E=c_0(\Lambda)$ and $E=\ell_p(\Lambda)$ for $1\leq p <
\infty$ are order continuous. For $E=\ell_1(\Lambda)$ we have
$E^*=E'=\ell_\infty(\Lambda)$. Therefore, the following corollary
follows from the above result. It appeared in \cite[Proposition~1
and Remark~2.a]{M-P}.

\begin{corollary}\label{cor:c0-lp-sum-easyinequality}
Let $\Lambda$ be a non-empty set and let $\{X_\lambda\, : \,
\lambda\in\Lambda\}$ be a family of Banach spaces. Let $X$ denote
the $c_0$-sum or $\ell_p$-sum of the family ($1\leq p \leq
\infty$). Then
$$
n(X) \,
\leq \, \inf\bigl\{n(X_\lambda)\,:\,\lambda\in \Lambda\bigr\}.
$$
\end{corollary}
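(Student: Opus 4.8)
The plan is to derive Corollary~\ref{cor:c0-lp-sum-easyinequality} directly from Corollary~\ref{cor:leq-Eprima-F} by identifying each of the relevant sums with an $E$-sum for a suitable choice of $E$, and then checking that $E$ falls under the scope of the previous result. Concretely, for a $c_0$-sum one takes $E=c_0(\Lambda)$, and for an $\ell_p$-sum with $1\leq p<\infty$ one takes $E=\ell_p(\Lambda)$; with these identifications the space $X$ in the statement is exactly $\bigl[\bigoplus_{\lambda\in\Lambda} X_\lambda\bigr]_E$, so the inequality $n(X)\leq \inf_\lambda n(X_\lambda)$ will be immediate once we know that these $E$ satisfy the hypothesis of Corollary~\ref{cor:leq-Eprima-F}.

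The key step is therefore to observe that $c_0(\Lambda)$ and $\ell_p(\Lambda)$ for $1\leq p<\infty$ are order continuous linear subspaces of $\R^\Lambda$ with an absolute norm; the absolute-norm axioms (a) and (b) are clear, and order continuity holds because none of these spaces contains an isomorphic copy of $\ell_\infty$ (equivalently, finitely supported elements are dense in them). This handles the cases $p\in[1,\infty)$ directly via the first inequality in Corollary~\ref{cor:leq-Eprima-F}. For the remaining endpoint $p=\infty$ one cannot use order continuity of $\ell_\infty(\Lambda)$, so instead one uses that $\ell_\infty(\Lambda)=\bigl(\ell_1(\Lambda)\bigr)'$: since $\ell_1(\Lambda)$ is order continuous, the second inequality in Corollary~\ref{cor:leq-Eprima-F}, applied with $E=\ell_1(\Lambda)$ and $E'=\ell_\infty(\Lambda)$, yields $n\bigl(\bigl[\bigoplus_\lambda X_\lambda\bigr]_{\ell_\infty(\Lambda)}\bigr)\leq \inf_\lambda n(X_\lambda)$, which is precisely the $p=\infty$ case.

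There is essentially no obstacle here: the proof is a bookkeeping matter of matching definitions, since all the analytic content has already been extracted in Theorem~\ref{thm:EMA-infinito} and Corollary~\ref{cor:leq-Eprima-F}. The only point deserving a word of care is the endpoint $p=\infty$, where the naive approach (trying to apply order continuity to $E=\ell_\infty(\Lambda)$ itself) fails and one must instead realize $\ell_\infty(\Lambda)$ as the K\"othe dual of the order-continuous space $\ell_1(\Lambda)$; this is exactly the reason the statement of Corollary~\ref{cor:leq-Eprima-F} records the inequality for $E'$ as well as for $E$. With that remark in place, the proof reduces to three lines citing \cite[Proposition~1 and Remark~2.a]{M-P} for the origin of the result and invoking Corollary~\ref{cor:leq-Eprima-F}.
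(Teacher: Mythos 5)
Your proposal is correct and follows essentially the same route as the paper: the authors likewise observe that $c_0(\Lambda)$ and $\ell_p(\Lambda)$ for $1\leq p<\infty$ are order continuous and invoke the first inequality of Corollary~\ref{cor:leq-Eprima-F}, while the case $p=\infty$ is handled by realizing $\ell_\infty(\Lambda)$ as the K\"othe dual of the order-continuous space $\ell_1(\Lambda)$ and using the second inequality. No gaps; the treatment of the endpoint $p=\infty$ is exactly the point the paper also singles out.
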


A particular case of the above corollary is the absolute sums
associated to a Banach space with one-unconditional basis (finite
or infinite). Related to infinite bases, let us comment that order
continuous linear subspaces of $\R^\N$ with absolute norm have
one-unconditional basis and, reciprocally, if a Banach space has a
one-unconditional basis it can be viewed (via the basis) as an
order continuous linear subspace of $\R^\N$ with absolute norm.

\begin{corollary}$ $\label{cor:One-unconditonalbasis-easy}
\begin{itemize}
\item[(a)] Let $E$ be $\R^m$ endowed with an absolute norm and
    let $X_1,\ldots,X_m$ be Banach spaces. Then
    $$
    n\left(\bigl[X_1\oplus \cdots \oplus X_m\bigr]_{E} \right)
        \leq \min\bigl\{n(X_1),\ldots,n(X_m)\bigr\}.
    $$
\item[(b)] Let $E$ be a Banach space with a one-unconditional
    (infinite) basis and let $\{X_j\, : \, j\in\N\}$ be a
    sequence of Banach spaces. Then
$$
n\left(\Bigl[\bigoplus_{j\in \N} X_j\Bigr]_{E}\right) \,
\leq \, \inf\bigl\{n(X_j)\,:\,j\in \N\bigr\}.
$$
\end{itemize}
\end{corollary}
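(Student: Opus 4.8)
The plan is to deduce Corollary~\ref{cor:One-unconditonalbasis-easy} directly from Corollary~\ref{cor:c0-lp-sum-easyinequality} — wait, no: the $E$-sums here are \emph{not} $c_0$- or $\ell_p$-sums in general. The correct source is Corollary~\ref{cor:leq-Eprima-F} together with Theorem~\ref{thm:EMA-infinito}. So the strategy is: in each case exhibit $E$ as a linear subspace of $\R^\Lambda$ (with $\Lambda=\{1,\dots,m\}$ in part (a) and $\Lambda=\N$ in part (b)) carrying an absolute norm, and then verify that the density hypothesis of Theorem~\ref{thm:EMA-infinito} holds — either by checking order continuity and quoting Corollary~\ref{cor:leq-Eprima-F}, or by applying the Bishop--Phelps argument to produce the required norm-attaining functionals.

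\begin{proof}
(a) Let $E=\R^m$ with an absolute norm, so $\Lambda=\{1,\dots,m\}$ is finite. Then $E$ is finite-dimensional, hence trivially order continuous: if $0\leq x_\alpha\downarrow 0$ in $E$, then each coordinate decreases to $0$ and, the norm being continuous on $\R^m$, $\|x_\alpha\|_E\to 0$. Thus Corollary~\ref{cor:leq-Eprima-F} applies and gives
$$
n\left(\bigl[X_1\oplus\cdots\oplus X_m\bigr]_E\right)\leq\inf\bigl\{n(X_j)\,:\,1\leq j\leq m\bigr\}=\min\bigl\{n(X_1),\dots,n(X_m)\bigr\},
$$
where the last equality is just that an infimum over a finite set is attained.

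(b) Let $E$ be a Banach space with a one-unconditional basis $(e_j)_{j\in\N}$, and identify $E$ with the linear subspace of $\R^\N$ consisting of those scalar sequences $(a_j)$ for which $\sum_j a_j e_j$ converges in $E$, with $\|(a_j)\|_E=\bigl\|\sum_j a_j e_j\bigr\|$. Since the basis is one-unconditional, this norm is absolute: conditions (a) and (b) in the definition of absolute norm follow from one-unconditionality and from $\|e_j\|=1$ (we may and do normalize the basis). It remains to check the density hypothesis of Theorem~\ref{thm:EMA-infinito}. For a one-unconditional basis the finitely supported vectors are norm-dense in $E$, so put
$$
A=\bigl\{(a_j)\in S_E\,:\,(a_j)\text{ has finite support}\bigr\},
$$
which is dense in $S_E$. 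Given $(a_j)\in A$ supported on $\{1,\dots,N\}$, pick $f\in S_{E^*}$ with $f\bigl(\sum_{j=1}^N a_j e_j\bigr)=1$; replacing $f$ by the functional $g$ with $g(e_j)=|f(e_j)|\,\sign(a_j)$ (which has $\|g\|_{E^*}\leq\|f\|_{E^*}=1$ by one-unconditionality and still satisfies $g(\sum a_j e_j)=\sum_{j=1}^N|a_j|\,|f(e_j)|\geq f(\sum a_j e_j)=1$), we may assume $b_j:=g(e_j)$ satisfies $b_j a_j\geq 0$ and $\sum_j b_j a_j=1$. Moreover $(b_j)\in E'$ with $\|(b_j)\|_{E'}=\|g\|_{E^*}\leq 1$: indeed for any $(c_j)\in B_E$, one-unconditionality gives $\sum_j|b_j||c_j|=\sum_j b_j\bigl(|c_j|\sign b_j\bigr)=g\bigl(\sum_j |c_j|\sign(b_j)\,e_j\bigr)\leq\|g\|_{E^*}\leq 1$; since $\sum_j b_j a_j=1\leq\|(b_j)\|_{E'}$ we conclude $\|(b_j)\|_{E'}=1$, i.e.\ $(b_j)\in S_{E'}$. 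Thus the hypotheses of Theorem~\ref{thm:EMA-infinito} are met with $\Lambda=\N$, and the theorem yields
$$
n\left(\Bigl[\bigoplus_{j\in\N}X_j\Bigr]_E\right)\leq\inf\bigl\{n(X_j)\,:\,j\in\N\bigr\}.\qedhere
$$
\end{proof}

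The only delicate point is the passage from an arbitrary supporting functional $f$ to one whose coefficients $b_j$ align in sign with $a_j$ and still have $E'$-norm at most one; this is exactly where one-unconditionality of the basis is used, and it is the same phenomenon that makes the norm on $E$ absolute. Everything else — density of finitely supported vectors, finiteness of the infimum in part (a) — is routine. Note that part (b) could alternatively be derived from Corollary~\ref{cor:leq-Eprima-F} \emph{when} $E$ is order continuous, but stating it via Theorem~\ref{thm:EMA-infinito} directly avoids having to separate the order-continuous case (e.g.\ $E=\ell_\infty$ has a one-unconditional "basis" only in a weak sense, so restricting to genuine Schauder bases and arguing by hand is cleaner).
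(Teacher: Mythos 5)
Your proof is correct, and it lands where the paper does: both parts are reductions to Theorem~\ref{thm:EMA-infinito}. Part (a) is exactly the paper's route (finite-dimensional $E$ is order continuous, so Corollary~\ref{cor:leq-Eprima-F} applies). For part (b) the paper also simply invokes Corollary~\ref{cor:leq-Eprima-F}, remarking that a Banach space with a one-unconditional basis, viewed via the basis as a subspace of $\R^\N$ with absolute norm, is automatically order continuous, whence $E^*=E'$ and every Hahn--Banach supporting functional already lies in $E'$; you instead verify the hypothesis of Theorem~\ref{thm:EMA-infinito} directly on the dense set of finitely supported unit vectors, replacing a supporting functional $f$ by its sign-adjusted version $g$ and checking $\|(b_j)\|_{E'}=1$. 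That verification is sound (the sign-change map is an isometry by one-unconditionality, so $\|g\|\leq\|f\|$, and your computation of the $E'$-norm is right) and is a slightly more self-contained variant of the same argument. The one inaccuracy is in your closing remark: a genuine one-unconditional Schauder basis always yields an order continuous subspace of $\R^\N$ (finitely supported vectors are dense and the norm is absolute), and $\ell_\infty$ has no Schauder basis at all, so there is no non-order-continuous case to separate; this does not affect the validity of your proof.
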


Our goal in the rest of the section is to present some cases in
which we may get the reversed inequality to the one given in
Theorem~\ref{thm:EMA-infinito}. When both theorems are applied,
we get an exact formula for the numerical index of some
absolute sums. The more general result we are able to prove is
the following.

\begin{theorem}\label{thm:absolutesum-equality}
Let $\Lambda$ be a non-empty set and let $E$ be a subspace or
$\R^\Lambda$ endowed with an absolute norm. Suppose that there are
a subset $A\subseteq S_E$ with $\cconv(A)=B_E$ and a subset
$B\subseteq S_{E'}$ norming for $E$ such that for every
$(a_\lambda)\in A$ and every $(b_\lambda)\in B$, there is
$\kappa\in \Lambda$ such that
$$
a_\lambda b_\lambda=0 \ \ \ \text{if $\lambda\neq \kappa$} \quad \text{and} \quad |a_\kappa b_\kappa|=1.
$$
Then, given an arbitrary family $\{X_\lambda\, : \,
\lambda\in\Lambda\}$ of Banach spaces,
$$
n\left(\Bigl[\bigoplus_{\lambda\in \Lambda} X_\lambda\Bigr]_{E}\right) \,
\geq \, \inf\bigl\{n(X_\lambda)\,:\,\lambda\in \Lambda\bigr\}.
$$
\end{theorem}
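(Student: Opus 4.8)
The plan is to run the argument "dual" to the proof of Theorem~\ref{thm:EMA-infinito}: there we pushed an operator $S$ on a summand $X_\kappa$ up to $X$; here we want, given an operator $T$ on $X=\bigl[\bigoplus_\lambda X_\lambda\bigr]_E$ with $\|T\|=1$, to find a summand $\kappa$ and an operator on $X_\kappa$ whose numerical radius is controlled by $v(T)$, so that $v(T)\geq \inf_\lambda n(X_\lambda)\,\|T\|$. Set $n_0=\inf_\lambda n(X_\lambda)$ and $X'=\bigl[\bigoplus_\lambda X^*_\lambda\bigr]_{E'}\subseteq X^*$. First I would fix $\eps>0$ and choose $x=(x_\lambda)\in S_X$ with $\|Tx\|>1-\eps$. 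Write $a_\lambda=\|x_\lambda\|$, so $(a_\lambda)\in S_E$; since $\cconv(A)=B_E$, approximate $(a_\lambda)$ by a convex combination of elements of $A$, and by a standard convexity/extreme-point reduction (using that $v$ and $\|\cdot\|$ behave well under such combinations, as in Lemma~\ref{lemma:Theorem93}) reduce to the case $(a_\lambda)\in A$ itself — this is the first place where $\cconv(A)=B_E$ is used, and it needs a little care because $x$ changes.

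Next, using that $B\subseteq S_{E'}$ is norming for $E$, pick $(b_\lambda)\in B$ with $\sum_\lambda b_\lambda a_\lambda$ close to $\|(a_\lambda)\|_E=1$; together with $x_\lambda^*\in S_{X_\lambda^*}$ supporting $x_\lambda$, the functional $x^*=(b_\lambda x_\lambda^*)\in S_{X'}$ nearly supports $x$. Now invoke the hypothesis: for this $(a_\lambda)\in A$ and $(b_\lambda)\in B$ there is a single index $\kappa$ with $a_\lambda b_\lambda=0$ for $\lambda\neq\kappa$ and $|a_\kappa b_\kappa|=1$. Thus $x$ is (essentially) concentrated on the $\kappa$-th coordinate as far as the pairing with $x^*$ is concerned: $a_\kappa\geq |a_\kappa b_\kappa|\cdot\|(b_\lambda)\|_{E'}^{-1}$ stays bounded below, and one gets that $x_\kappa/\|x_\kappa\|$ and $x_\kappa^*$ form a pair in $\Pi(X_\kappa)$ that almost norms both $x$ and $Tx$. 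The idea is then to look at $S=P_\kappa T I_\kappa\in L(X_\kappa)$, or better at the "compression to the $\kappa$-slot" of $T$, estimate $\|S\|$ from below by evaluating near $x_\kappa$, and bound $v(S)$ from above in terms of $v(T)$ by transporting pairs $(y,y^*)\in\Pi(X_\kappa)$ back into $\Pi(X)$ via $I_\kappa$ and a suitable $(b_\lambda)$ from $B$ — here again the disjointness clause of the hypothesis guarantees no cross terms appear. Then $v(T)\geq v(S)\,(1-\delta(\eps))\geq n(X_\kappa)\|S\|(1-\delta(\eps))\geq n_0(1-\eps')(1-\delta(\eps))$, and letting $\eps\to0$ finishes the proof.

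The main obstacle I expect is the bookkeeping that keeps everything on a \emph{single} coordinate $\kappa$ simultaneously: the convex-combination reduction to $A$, the near-norming choice from $B$, and the index $\kappa$ furnished by the hypothesis all interact, and one must be sure the same $\kappa$ works for the norm lower bound on $S$ and for the numerical-radius transfer, while only losing $O(\eps)$. A clean way is probably to first fix $(a_\lambda)\in A$ via the convexity argument, then note that \emph{every} $(b_\lambda)\in B$ concentrates $(a_\lambda)$ on some index $\kappa(b)$ with $|a_{\kappa(b)}|=|b_{\kappa(b)}|^{-1}$, and show $a_\kappa\geq 1/\|(b_\lambda)\|_{E'}$ is bounded away from $0$ uniformly — in fact $a_\kappa=1$ must hold once one checks $\|(b_\lambda)\|_{E'}\geq|b_\kappa||a_\kappa|^{-1}\cdot$(something). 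Handling the case $a_\kappa=0$ (where the hypothesis is vacuous or forces $b_\kappa$ undefined) and the general-$x$ vs. extreme-$(a_\lambda)$ passage are the routine-but-delicate points; everything else is a direct imitation of the proof of Theorem~\ref{thm:EMA-infinito} with the roles of $E$ and $E'$ interchanged.
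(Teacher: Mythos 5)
Your overall strategy --- find a nearly norming pair $(x,x^*)$ with $x$ built over some $(a_\lambda)\in A$ and $x^*$ over some $(b_\lambda)\in B$, use the hypothesis to concentrate the pairing on a single index $\kappa$, and then transfer the problem to an operator on $X_\kappa$ --- is indeed the skeleton of the paper's proof. But the step you leave unspecified is precisely the heart of the argument, and the one concrete candidate you name, $S=P_\kappa T I_\kappa$, fails. Its norm can be $0$ while $\|T\|=1$: take $X=\R\oplus_1\R$ and $T$ the coordinate swap; then $P_\kappa T I_\kappa=0$ for both $\kappa$, so no lower bound of the form $\|S\|\geq\|T\|-\eps$ is available and the chain $v(T)\geq v(S)\geq n(X_\kappa)\|S\|$ yields nothing. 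Even though the hypothesis does force $a_\kappa=1$, the element $x$ may have nonzero components off $\kappa$ (e.g.\ for $\ell_\infty$-sums), so $T I_\kappa x_\kappa$ is unrelated to $Tx$ and ``evaluating near $x_\kappa$'' cannot recover $|x^*(Tx)|>\|T\|-\eps$. A related slip: you first fix $x$ with $\|Tx\|$ large and then pick $(b_\lambda)$ nearly supporting $x$; such a functional need not nearly norm $Tx$. One must instead choose the pair $(x,x^*)\in\mathcal{A}\times\mathcal{B}$ with $|x^*(Tx)|>\|T\|-\eps$ directly, which is exactly what $\cconv(\mathcal{A})=B_X$ and $\mathcal{B}$ norming provide (no extreme-point reduction on a previously fixed $x$ is needed).

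The missing construction is the following. With $\kappa$ given by the hypothesis, one replaces $I_\kappa$ by the section $\Phi:X_\kappa\to X$ defined by $[\Phi(z)]_\kappa=a_\kappa z$ and $[\Phi(z)]_\lambda=a_\lambda\, y_\kappa^*(z)\, x_\lambda$ for $\lambda\neq\kappa$ (where $y_\kappa^*$ supports $x_\kappa$), so that $\Phi(x_\kappa)=x$ and $\|\Phi\|\leq 1$; one uses Bishop--Phelps to assume $x_\kappa^*$ attains its norm at some $\widetilde{x}_\kappa\in S_{X_\kappa}$; and one sets
$$
S(z)=\Bigl[\sum_{\lambda\neq\kappa}b_\lambda\, x_\lambda^*\bigl(T_\lambda(\Phi(z))\bigr)\Bigr]\widetilde{x}_\kappa\;+\;b_\kappa\, T_\kappa(\Phi(z)),
$$
i.e.\ the off-$\kappa$ part of $x^*\circ T\circ\Phi$ is absorbed into a rank-one operator pointing at $\widetilde{x}_\kappa$. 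Then $x_\kappa^*(Sx_\kappa)=x^*(Tx)$ gives $\|S\|>\|T\|-\eps$, and the numerical-radius transfer goes in the opposite direction from the one you describe: take $(\zeta,\zeta^*)\in\Pi(X_\kappa)$ with $|\zeta^*(S\zeta)|\geq n(X_\kappa)\|S\|$ and push it up to $(\Phi(\zeta),\Psi(\zeta^*))\in\Pi(X)$, where $[\Psi(\zeta^*)]_\kappa=b_\kappa\zeta^*$ and $[\Psi(\zeta^*)]_\lambda=b_\lambda\,\zeta^*(\widetilde{x}_\kappa)\,x_\lambda^*$ for $\lambda\neq\kappa$. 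The disjointness $a_\lambda b_\lambda=0$ for $\lambda\neq\kappa$ is exactly what makes $[\Psi(\zeta^*)](\Phi(\zeta))=1$ and $[\Psi(\zeta^*)](T\Phi(\zeta))=\zeta^*(S\zeta)$, whence $v(T)\geq n(X_\kappa)(\|T\|-\eps)$. Without $\Phi$, $S$ and $\Psi$, the proposal is a plan rather than a proof.
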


\begin{proof}
Write $X=\Bigl[\bigoplus_{\lambda\in \Lambda} X_\lambda\Bigr]_{E}$
and $X'=\Bigl[\bigoplus_{\lambda\in \Lambda}
X^*_\lambda\Bigr]_{E'}\subseteq X^*$. Consider the sets
\begin{align*}
\mathcal{A} & = \left\{(a_\lambda x_\lambda)\ :\ a_\lambda\in \R^+_0,\,x_\lambda \in S_{X_\lambda}\,
    \forall \lambda\in\Lambda,\ (a_\lambda)\in A\right\}\subset S_X,\\
\mathcal{B} & = \left\{(b_\lambda x^*_\lambda)\ :\ b_\lambda\in \R^+_0,\,x^*_\lambda \in S_{X^*_\lambda}\,
    \forall \lambda\in\Lambda,\ (b_\lambda)\in B\right\}\subset S_{X'}.
\end{align*}
Then, it is clear that $\cconv(\mathcal{A})=B_X$ and that
$\mathcal{B}$ is norming for $X$.

Fix $T\in L(X)$ and $\varepsilon>0$, and write $T=(T_\lambda)$
where $T_\lambda=P_\lambda T \in L(X,X_\lambda)$. We may find
$x=(a_\lambda x_\lambda)\in \mathcal{A}$ and $x^*=(b_\lambda
x_\lambda^*)\in \mathcal{B}$ such that
$$
\|T\| -\varepsilon<|x^*(Tx)|=
\left|\sum_{\lambda\in \Lambda} b_\lambda x_\lambda^*
\bigl(T_\lambda\bigl((a_\lambda x_\lambda)_{\lambda\in \Lambda}\bigr)\bigr) \right|.
$$
By hypothesis, there is $\kappa\in \Lambda$ such that
\begin{equation}\label{eq:aibi=0}
a_\lambda b_\lambda=0 \text{ if $\lambda\neq \kappa$} \quad \text{and} \quad a_\kappa b_\kappa=1,
\end{equation}
and using the Bishop-Phelps theorem, we may and do suppose that
$x_\kappa^*\in S_{X_\kappa^*}$ attains its norm on an element
$\widetilde{x}_\kappa\in S_{X_\kappa}$. We also take
$y_\kappa^*\in S_{X_\kappa^*}$ such that $y_\kappa^*(x_\kappa)=1$.
For every $z\in X_\kappa$, we define $\Phi(z)\in X$ by
$$
[\Phi(z)]_\lambda=a_\lambda\, y_\kappa^*(z)\, x_\lambda \ \ \text{if $\lambda\neq \kappa$,}
\quad \text{and} \quad [\Phi(z)]_\kappa=a_\kappa\, z,
$$
which is well-defined since the norm of $E$ is absolute, satisfies
$\|\Phi(z)\|\leq \|z\|$ for every $z\in X_\kappa$ and that
$\Phi(z)$ is linear in $z$. Also, it is clear that
$\Phi(x_\kappa)=x$.

We consider the operator $S\in L(X_\kappa)$ given by
\begin{align*}
S(z) &=\left[\sum_{\lambda \neq \kappa} b_\lambda x_\lambda^* \bigl(T_\lambda(\Phi(z))\bigr)\right]
 \widetilde{x}_\kappa \ + \ b_\kappa T_\kappa(\Phi(z))
 \qquad \bigl(z\in X_\kappa\bigr)
\end{align*}
and observe that
$$
|x_\kappa^*(Sx_\kappa)|= \left|\sum_{\lambda\neq \kappa} b_\lambda x_\lambda^* \bigl(T_\lambda(\Phi(x_\kappa))\bigr)
 + b_\kappa x_\kappa^*\bigl(T_\kappa(\Phi(x_\kappa))\bigr) \right|=|x^*(Tx)|>\|T\|-\eps,
$$
so $\|S\|>\|T\|-\eps$. It follows that $v(S)>
n(X_\kappa)(\|T\|-\eps)$ and so there is $(\zeta,\zeta^*)\in
\Pi(X_\kappa)$ such that
\begin{equation}\label{eq:sums-reversed-inequality-S}
|\zeta^*(S \zeta)|\geq n(X_\kappa)(\|T\|-\eps).
\end{equation}
Now, we consider $\Psi(\zeta^*)\in X'\subset X^*$ given by
$$
[\Psi(\zeta^*)]_\lambda=
b_\lambda\,\zeta^*(\widetilde{x}_\kappa)\,x_\lambda^* \ \ \text{if $\lambda\neq \kappa$} \quad \text{and} \quad
[\Psi(\zeta^*)]_\kappa=b_\kappa \zeta^*,
$$
which is well-defined since $E'$ has absolute norm, and satisfies
$\|\Psi(\zeta^*)\|_{X^*} \leq 1$. We observe that,
by~\eqref{eq:aibi=0},
$$
[\Psi(\zeta^*)](\Phi(\zeta))=
\sum_{\lambda \neq \kappa} b_\lambda a_\lambda \zeta^*(\widetilde{x}_\kappa)y^*_\kappa(\zeta)x_\lambda^*(x_\lambda)
 + b_\kappa a_\kappa \zeta^*(\zeta) =
b_\kappa a_\kappa \zeta^*(\zeta)=\zeta^*(\zeta)=1
$$
and that
\begin{align*} [\Psi(\zeta^*)]\bigl(T(\Phi(\zeta))\bigr)  =
\left|\left[\sum_{\lambda\neq \kappa} b_\lambda\, x_\lambda^*
\bigl(T_\lambda(\Phi(\zeta))\bigr)\right]\zeta^*(\widetilde{x}_\kappa) + b_\kappa \zeta^*\bigl(
T_\kappa(\Phi(\zeta))\bigr) \right| = |\zeta^*(S \zeta)|.
\end{align*}
It then follows from \eqref{eq:sums-reversed-inequality-S} that
$$
v(T)\geq n(X_\kappa)(\|T\|-\eps) \geq \inf\{n(X_\lambda)\,:\, \lambda\in \Lambda\} (\|T\|-\eps).
$$
Letting $\eps\downarrow 0$ and considering all $T\in L(X)$, we get
$n(X)\geq \inf\{n(X_\lambda)\,:\, \lambda\in \Lambda\}$.
\end{proof}

Let us list the main consequences of the above theorem. The first
one gives a formula for the numerical index of $\ell_1$-sums and
$\ell_\infty$-sums. This result appeared in
\cite[Proposition~1]{M-P}.

\begin{corollary}\label{cor:equality-l1-linfty}
Let $\Lambda$ be a nonempty set and let $\{X_\lambda\, : \,
\lambda\in\Lambda\}$ be an arbitrary family of Banach spaces. Then
$$
n\left(\Bigl[\bigoplus_{\lambda\in \Lambda}
X_\lambda\Bigr]_{\ell_1}\right)   = n\left(\Bigl[\bigoplus_{\lambda\in \Lambda}
X_\lambda\Bigr]_{\ell_\infty}\right) =
\inf\bigl\{n(X_\lambda)\,:\,\lambda\in \Lambda\bigr\}.
$$
\end{corollary}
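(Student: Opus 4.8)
The plan is to derive Corollary~\ref{cor:equality-l1-linfty} by combining the two main theorems of the section, so that each of the two equalities is obtained by squeezing between the inequality ``$\leq$'' coming from Theorem~\ref{thm:EMA-infinito} (or its corollaries) and the inequality ``$\geq$'' coming from Theorem~\ref{thm:absolutesum-equality}. Concretely, I would take $\Lambda$ a nonempty set, $E=\ell_1(\Lambda)$ for the first equality and $E=\ell_\infty(\Lambda)$ for the second, and verify in each case that the hypotheses of Theorem~\ref{thm:absolutesum-equality} hold; the reverse inequality is already recorded in Corollary~\ref{cor:c0-lp-sum-easyinequality} (which applies to both $\ell_1$- and $\ell_\infty$-sums), so no extra work is needed for the ``$\leq$'' direction.

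First I would treat $E=\ell_1(\Lambda)$. Here I take $A=\{\chi_{\{\lambda\}}\,:\,\lambda\in\Lambda\}\subseteq S_E$; since $B_{\ell_1(\Lambda)}=\cconv\{\pm\chi_{\{\lambda\}}\}$ and $A$ together with its negatives has the same closed convex hull as $A$ does when we allow the $x_\lambda$ to absorb the sign (more precisely, $\cconv(A)=B_E$ holds because every norm-one element of $\ell_1(\Lambda)$ with finite support is a convex combination of signed unit vectors, and such elements are dense), and $E'=\ell_\infty(\Lambda)$ so I take $B=S_{E'}$, which is trivially norming for $E$. Given $(a_\lambda)=\chi_{\{\kappa\}}\in A$ and any $(b_\lambda)\in B=S_{\ell_\infty(\Lambda)}$, the product $(a_\lambda b_\lambda)$ is supported on $\{\kappa\}$ and equals $b_\kappa\chi_{\{\kappa\}}$; this is not quite $|a_\kappa b_\kappa|=1$ unless $|b_\kappa|=1$, so I would instead take $B$ to be the set of $(b_\lambda)\in S_{\ell_\infty(\Lambda)}$ with $|b_\lambda|=1$ for all $\lambda$, which is still norming for $\ell_1(\Lambda)$ (the supremum defining $\|(a_\lambda)\|_{\ell_1}$ is attained, in the dual pairing, by choosing $b_\lambda=\sign(a_\lambda)$). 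With this choice, for $(a_\lambda)=\chi_{\{\kappa\}}$ and any such $(b_\lambda)$ we get $a_\lambda b_\lambda=0$ for $\lambda\neq\kappa$ and $|a_\kappa b_\kappa|=1$, so Theorem~\ref{thm:absolutesum-equality} applies and yields $n\bigl([\bigoplus X_\lambda]_{\ell_1}\bigr)\geq\inf_\lambda n(X_\lambda)$.

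Next I would treat $E=\ell_\infty(\Lambda)$ symmetrically. Now $E'=\ell_1(\Lambda)$, so I take $B=\{\chi_{\{\lambda\}}\,:\,\lambda\in\Lambda\}\subseteq S_{E'}$, which is norming for $\ell_\infty(\Lambda)$ since $\|(a_\lambda)\|_\infty=\sup_\lambda|a_\lambda|=\sup_\lambda|\langle\chi_{\{\lambda\}},(a_\lambda)\rangle|$. For $A$ I take the set of $(a_\lambda)\in S_{\ell_\infty(\Lambda)}$ with $|a_\lambda|=1$ for every $\lambda$; this set has closed convex hull equal to $B_{\ell_\infty(\Lambda)}$ because $\{\pm1\}^\Lambda$ already does (every finitely supported element of the ball, hence a dense set, is a convex combination of $\pm1$-valued sequences). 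Given such $(a_\lambda)\in A$ and $(b_\lambda)=\chi_{\{\kappa\}}\in B$, again $a_\lambda b_\lambda=0$ for $\lambda\neq\kappa$ and $|a_\kappa b_\kappa|=|a_\kappa|=1$, so Theorem~\ref{thm:absolutesum-equality} gives $n\bigl([\bigoplus X_\lambda]_{\ell_\infty}\bigr)\geq\inf_\lambda n(X_\lambda)$. Combining with Corollary~\ref{cor:c0-lp-sum-easyinequality} finishes the proof.

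The only subtle point — and the one I would be careful about — is the precise choice of the sets $A$ and $B$: the naive choices ($A$ = unit vectors, $B$ = whole unit sphere of $E'$) fail the disjointness-with-modulus-one condition of Theorem~\ref{thm:absolutesum-equality}, and one must instead pair unit vectors on one side with $\pm1$-valued sequences on the other, checking in each case that $\cconv(A)=B_E$ and that $B$ is norming. Once the right sets are in hand, everything else is a direct invocation of the two theorems, with no further obstacle.
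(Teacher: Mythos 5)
Your proof is correct and follows essentially the same route as the paper: the paper also gets the inequality $\leq$ from Corollary~\ref{cor:c0-lp-sum-easyinequality} and verifies the hypotheses of Theorem~\ref{thm:absolutesum-equality} with exactly your choices of $A$ and $B$ (unit vectors on one side paired against $\pm1$-valued sequences on the other, with the roles interchanged between the $\ell_1$ and $\ell_\infty$ cases). Your remark that the signs are absorbed by the $x_\lambda$ when passing from $A$ to the set $\mathcal{A}\subset S_X$ is precisely how the condition $\cconv(\mathcal{A})=B_X$ is used in the proof of that theorem.
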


\begin{proof}
One inequality was proved in
Corollary~\ref{cor:c0-lp-sum-easyinequality}. To get the reversed
inequality, we just show that
Theorem~\ref{thm:absolutesum-equality} is applicable. For
$E=\ell_1(\Lambda)$, we consider
$$
A=\{\chi_{\{\lambda\}}\ : \ \lambda\in\Lambda\}\subset S_E \quad \text{and} \quad
B=\{(b_\lambda)\ : \ |b_\lambda|=1 \ \forall\lambda\in \Lambda\}\subset S_{E'}.
$$
Then it is immediate that $\cconv(A)=S_E$, that $B$ is norming for
$E$ and that given $a\in A$ and $b\in B$, there is $\kappa\in
\Lambda$ such that
\begin{equation*}
a_\lambda b_\lambda=0 \ \ \ \text{if $\lambda\neq \kappa$} \quad \text{and} \quad |a_\kappa b_\kappa|=1.
\end{equation*}
For $E=\ell_\infty(\Lambda)$, we interchange the roles of the sets
above and consider
$$
A=\{(a_\lambda)\ : \ |a_\lambda|=1 \ \forall\lambda\in \Lambda\}\subset S_{E} \quad \text{and} \quad
B=\{\chi_{\{\lambda\}}\ : \ \lambda\in\Lambda\}\subset S_{\ell_1(\Lambda)}\subset S_{E'}.
$$
Now, $B$ is trivially norming for $\ell_\infty(\Lambda)$ and
$\cconv(A)=B_E$ (indeed, the set of those norm-one functions which
have finitely many values is dense in $S_E$ by Lebesgue theorem
and the fact that a function taking finitely many values belongs
to $\conv(A)$ is easily proved by induction on the number of
values). Finally, it is immediate that given $a\in A$ and $b\in
B$, there is $\kappa\in \Lambda$ such that
\begin{equation*}
a_\lambda b_\lambda=0 \ \ \ \text{if $\lambda\neq \kappa$} \quad \text{and} \quad |a_\kappa b_\kappa|=1.\qedhere
\end{equation*}
\end{proof}

\begin{remark}
For $c_0$-sums the result above is also true but it is not
possible to prove it using
Theorem~\ref{thm:absolutesum-equality} (it is not possible to
find sets $A\subset S_E$ and $B\subset S_{E'}$ like there in
$E=c_0(\Lambda)$ unless it is finite-dimensional). We have to
wait until Section~\ref{sec:union} to provide a proof.
\end{remark}

Next result allows to calculate the numerical index of $E$-sums
of Banach spaces when $E$ has the RNP and $n(E)=1$. We will use
very recent results of H.-J.~Lee and the first and second named
authors of this paper \cite{LeeMartinMeri}.

\begin{corollary}\label{cor:RNP}
Let $\Lambda$ be a non-empty set and let $E$ be a subspace or
$\R^\Lambda$ endowed with an absolute norm. Suppose $E$ has the
RNP and $n(E)=1$. Then, given an arbitrary family $\{X_\lambda\, :
\, \lambda\in\Lambda\}$ of Banach spaces,
$$
n\left(\Bigl[\bigoplus_{\lambda\in \Lambda} X_\lambda\Bigr]_{E}\right) \,
= \, \inf\bigl\{n(X_\lambda)\,:\,\lambda\in \Lambda\bigr\}.
$$
\end{corollary}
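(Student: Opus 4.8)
The plan is to derive Corollary~\ref{cor:RNP} by combining Theorem~\ref{thm:EMA-infinito} (for the ``$\leq$'' inequality) and Theorem~\ref{thm:absolutesum-equality} (for the ``$\geq$'' inequality), so the real work is to verify that the hypotheses of both theorems hold for a space $E$ with the RNP and $n(E)=1$. For the upper bound, note that $E$ has the RNP, hence by the Bishop--Phelps--Bollob\'as type phenomenon available for RNP spaces (or more simply, by the fact that in a space with the RNP the norm-attaining functionals are norm dense in $E^*$, a classical consequence of the RNP via Stegall's or Bourgain's theorem), the set of norm-attaining functionals of norm one is dense in $S_{E^*}$; since $E'\subseteq E^*$ isometrically, and since for each norm-attaining $x^*\in S_{E^*}$ one has a point $a\in S_E$ with $x^*(a)=1$, we obtain a dense subset $A\subseteq S_E$ (namely the set of points at which some norm-one functional attains its norm, which is dense because $E$ has the RNP) together with, for each $a\in A$, a functional $b\in S_{E'}$ with $\sum_\lambda b_\lambda a_\lambda = x^*(a) = 1$. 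Here I must be careful that the attaining functional lies in $E'$ and not merely in $E^*$; this is where I would invoke \cite{LeeMartinMeri}, which presumably guarantees, for $E$ with the RNP and $n(E)=1$, that the relevant norming functionals can be taken in $E'$, or alternatively that $E' = E^*$ holds here (an RNP K\"othe sequence space is order continuous, so indeed $E'=E^*$). Thus Theorem~\ref{thm:EMA-infinito} applies and gives $n(X)\leq \inf_\lambda n(X_\lambda)$.

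For the lower bound I would appeal to the structure theory of spaces with the RNP and numerical index one. The key external input from \cite{LeeMartinMeri} should be a characterization of the form: a Banach space $E$ with the RNP has $n(E)=1$ if and only if $B_E = \cconv\{\text{something like the ``spreading'' or ``lushness'' exposed points}\}$ with a compatible norming set of functionals; concretely, for such $E$ one expects to find $A\subseteq S_E$ with $\cconv(A)=B_E$ and $B\subseteq S_{E'}$ norming for $E$ such that each $a\in A$ and each $b\in B$ are ``aligned on a single coordinate'' in the sense of Theorem~\ref{thm:absolutesum-equality}, i.e.\ $a_\lambda b_\lambda=0$ for $\lambda\neq\kappa$ and $|a_\kappa b_\kappa|=1$ for some $\kappa$. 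In the discrete lattice setting this ``$E$ has the RNP and $n(E)=1$'' should force $A$ to consist (up to closure) of signed unit vectors $\pm\chi_{\{\kappa\}}$ — because the extreme/denting points of the unit ball of a lattice with numerical index one and the RNP are of this simple form — and then the alignment condition is automatic with any norming $B\subseteq S_{E'}$. Once $A$ and $B$ are produced, Theorem~\ref{thm:absolutesum-equality} directly yields $n(X)\geq \inf_\lambda n(X_\lambda)$, and combining the two inequalities finishes the proof.

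The main obstacle I anticipate is precisely the identification of the correct statement to quote from \cite{LeeMartinMeri} and checking that the denting/exposed point structure it provides is compatible with the ``single coordinate'' alignment condition of Theorem~\ref{thm:absolutesum-equality}. In particular, the RNP is needed twice: once to get denseness of the set $A$ of attaining points (upper bound) and once to guarantee that $B_E$ is the closed convex hull of its denting points, each of which — because $n(E)=1$ and $E$ is a lattice — must be a unimodular multiple of a single basis vector (lower bound). I would write the proof as: ``The inequality `$\leq$' follows from Theorem~\ref{thm:EMA-infinito} once we observe that, $E$ having the RNP, $E$ is order continuous so $E'=E^*$ and the norm-attaining functionals are dense in $S_{E^*}=S_{E'}$, providing the required dense set $A$. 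For `$\geq$', we use \cite{LeeMartinMeri} to see that $B_E=\cconv(A)$ where $A$ is the set of denting points of $B_E$; since $E$ is a K\"othe sequence space with $n(E)=1$ and the RNP, each such denting point is of the form $\pm\chi_{\{\kappa\}}$, and taking $B=S_{E'}$ (or a countable norming subset) the alignment hypothesis of Theorem~\ref{thm:absolutesum-equality} holds trivially. That theorem then gives the reversed inequality, and the proof is complete.'' Any remaining gap would lie in pinning down the exact hypotheses under which \cite{LeeMartinMeri} delivers this denting-point description, which is the step I would scrutinize most carefully.
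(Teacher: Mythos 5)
Your overall strategy --- ``$\leq$'' from Theorem~\ref{thm:EMA-infinito} using that the RNP forces $E$ to be order continuous (no copy of $\ell_\infty$), hence $E'=E^*$, and ``$\geq$'' from Theorem~\ref{thm:absolutesum-equality} with $A$ the denting points of $B_E$ and the external input from \cite{LeeMartinMeri} --- is exactly the paper's. But the concrete lower-bound argument you finally wrote down rests on two false claims. First, the denting points of $B_E$ need \emph{not} be of the form $\pm\chi_{\{\kappa\}}$: take $E=\ell_\infty^2$, which has the RNP and $n(E)=1$, yet its denting (= extreme) points are $(\pm1,\pm1)$, supported on all coordinates. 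Second, even when a denting point is $\chi_{\{\kappa\}}$, the alignment condition is not automatic for an arbitrary norming $B\subseteq S_{E'}$: one gets $a_\lambda b_\lambda=0$ off $\kappa$ for free, but $|a_\kappa b_\kappa|=|b_\kappa|$ need not be $1$ (e.g.\ $E=\ell_1(\Lambda)$, $E'=\ell_\infty(\Lambda)$, $b\in S_{\ell_\infty}$ with $|b_\kappa|<1$). So taking $B=S_{E'}$ destroys the hypothesis of Theorem~\ref{thm:absolutesum-equality}.

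The paper's proof takes $A$ to be the denting points of $B_E$ (so $\cconv(A)=B_E$ by the RNP) and $B$ to be the \emph{extreme points} of $B_{E'}$ (norming by Krein--Milman), and the statement imported from \cite{LeeMartinMeri} is precisely the joint alignment property of the pair: for every denting point $a$ of $B_E$ and every extreme point $b$ of $B_{E'}$ there is $\kappa$ with $a_\lambda b_\lambda=0$ for $\lambda\neq\kappa$ and $|a_\kappa b_\kappa|=1$. This is where $n(E)=1$ is used; it is a nontrivial fact about the pair $(a,b)$, not a description of $A$ alone, and your first, vaguer guess at the content of \cite{LeeMartinMeri} was essentially the right one before you ``simplified'' it into the false version. (A minor point on the upper bound: once $E'=E^*$, the dense set required by Theorem~\ref{thm:EMA-infinito} can be taken to be all of $S_E$ by Hahn--Banach; no Bishop--Phelps or Stegall-type density argument is needed there, and indeed the paper just quotes Corollary~\ref{cor:leq-Eprima-F}.)
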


\begin{proof}
Since $E$ has the RNP, it is order continuous (it does not
contain $\ell_\infty$) and so $E^*=E'$. Then inequality $\leq$
follows from Corollary~\ref{cor:leq-Eprima-F}. Let us prove the
reversed inequality. Let $A$ be the set of denting points of
$B_E$ and let $B$ be the set of extreme points of $B_{E'}$. It
follows from the RNP that $\cconv(A)=B_E$ and Krein-Milman
theorem gives that $B$ is norming. It is shown in
\cite{LeeMartinMeri} that, under these hypotheses, given
$a=(a_\lambda)\in A$ and $b=(b_\lambda)\in B$, there is
$\kappa\in \Lambda$ such that
$$
a_\lambda b_\lambda=0 \ \ \ \text{if $\lambda\neq \kappa$} \quad \text{and} \quad |a_\kappa b_\kappa|=1.
$$
This allows us to use Theorem~\ref{thm:absolutesum-equality} to
get the desired inequality.
\end{proof}

\begin{remark}
It follows from the proof of the above result that the
hypothesis of RNP is not needed in its full generality.
Actually, only two facts are needed: that $n(E)=1$ and that
$\cconv(A)=B_E$ where $A$ is the set of denting points of
$B_E$.
\end{remark}

Let us particularize here the above result for Banach spaces with
one-unconditional basis (finite or infinite).

\begin{corollary}$ $\label{cor:abs-sums-equality-basis}
\begin{itemize}
\item[(a)] Let $E$ be $\R^m$ endowed with an absolute norm
    such that $n(E)=1$, and let $X_1,\ldots,X_m$ be Banach
    spaces. Then
    $$
    n\left(\bigl[X_1\oplus \cdots \oplus X_m\bigr]_{E} \right)
        = \min\bigl\{n(X_1),\ldots,n(X_m)\bigr\}.
    $$
\item[(b)] Let $E$ be a Banach space with one-unconditional
    basis, having the RNP and such that $n(E)=1$. Then, given
    an arbitrary sequence $\{X_j\, : \, j\in \N\}$ of Banach
    spaces,
$$
n\left(\Bigl[\bigoplus_{j\in \N} X_j\Bigr]_{E}\right) \,
= \, \inf\bigl\{n(X_j)\,:\,j\in \N\bigr\}.
$$
\end{itemize}
\end{corollary}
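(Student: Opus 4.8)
The plan is to deduce both items of Corollary~\ref{cor:abs-sums-equality-basis} directly from Corollary~\ref{cor:RNP} by checking that the relevant spaces $E$ fit its hypotheses, exactly as was done for Corollary~\ref{cor:abs-sums-equality-basis}'s predecessors (Corollary~\ref{cor:One-unconditonalbasis-easy}) vis-\`a-vis the easy inequality. For item (b), the space $E$ is assumed outright to have a one-unconditional basis, the RNP, and $n(E)=1$; as noted right after Corollary~\ref{cor:c0-lp-sum-easyinequality}, a Banach space with a one-unconditional basis can be viewed via that basis as a linear subspace of $\R^\N$ with absolute norm (one checks properties (a) and (b) of an absolute norm hold for the coordinate functionals of the basis, using one-unconditionality for (a)). So $E$ is literally a space of the type considered in Corollary~\ref{cor:RNP} with $\Lambda=\N$, and the conclusion is immediate.

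For item (a), the space $E=\R^m$ with an absolute norm is already in the required form (take $\Lambda=\{1,\dots,m\}$), and since it is finite-dimensional it automatically has the RNP. The hypothesis $n(E)=1$ is exactly what is assumed. Hence Corollary~\ref{cor:RNP} applies and gives $n\bigl([X_1\oplus\cdots\oplus X_m]_E\bigr)=\inf\{n(X_j):1\le j\le m\}$, which in the finite case is the minimum. So the only thing to write is: apply Corollary~\ref{cor:RNP}, observing that finite-dimensionality supplies the RNP in (a), and that a one-unconditional basis supplies the absolute-norm structure on a subspace of $\R^\N$ in (b).

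The only mild subtlety — and I do not regard it as a genuine obstacle — is making sure the identification in (b) of an abstract Banach space $E$ with a one-unconditional basis $(e_n)$ as a subspace of $\R^\N$ is the \emph{isometric} one needed to match Corollary~\ref{cor:RNP}: one sends $x\in E$ to the sequence of its coordinates $(e_n^*(x))_n$, where $(e_n^*)$ are the biorthogonal functionals; one-unconditionality with constant $1$ gives that $\|\sum a_n e_n\|$ depends only on $(|a_n|)$ and is monotone, i.e. properties (a) and (c), while $\|e_n\|=1$ gives (b), and completeness of $E$ gives the completeness of the norm. Since the $E$-sum construction and the numerical index are isometric invariants, transporting everything back through this isometry yields the claim. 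After that setup, both statements are one-line invocations of Corollary~\ref{cor:RNP}, so the proof is genuinely short; there is no hard step, only bookkeeping.

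Concretely, I would write the proof as follows. For (a): $E=\R^m$ with an absolute norm is finite-dimensional, hence has the RNP, and by hypothesis $n(E)=1$; thus Corollary~\ref{cor:RNP} applies with $\Lambda=\{1,\dots,m\}$ and gives the equality, the infimum over a finite set being the minimum. For (b): viewing $E$ via its one-unconditional basis as a linear subspace of $\R^\N$ endowed with an absolute norm (as discussed after Corollary~\ref{cor:c0-lp-sum-easyinequality}), the hypotheses of Corollary~\ref{cor:RNP} are met with $\Lambda=\N$, and the conclusion follows at once.
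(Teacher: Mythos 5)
Your proposal is correct and is exactly the paper's route: the corollary is stated as a direct particularization of Corollary~\ref{cor:RNP}, using that a finite-dimensional space has the RNP for item (a) and the identification (already noted in the paper after Corollary~\ref{cor:c0-lp-sum-easyinequality}) of a space with a one-unconditional basis as a subspace of $\R^\N$ with absolute norm for item (b). Nothing further is needed.
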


Let us comment that the proof of Corollary~\ref{cor:RNP} in the
case of a finite-dimensional space can be done using results of
S.~Reisner \cite{Reis} and so, in this case, the very recent
reference \cite{LeeMartinMeri} is not needed.

\section{Numerical index and one-complemented subspaces}\label{sec:subspaces-examples}
One may wonder if there is any general inequality between the
numerical index of a Banach space and the numerical indices of its
subspaces (or of some kind of subspaces). Since $n(C(K))=1$ and
every Banach space contains (isometrically) a $C(K)$-space as
subspace (maybe with dimension one) and it is contained
(isometrically) in a $C(K)$ space (Banach-Mazur theorem), it is
not possible to get any general inequality. If we restrict
ourselves to special kind of subspaces, we may show a positive
result. Indeed, item (a) of
Corollary~\ref{cor:One-unconditonalbasis-easy} for $m=2$ shows
that the numerical index of a Banach space which is the absolute
sum of two subspaces is less or equal than the numerical index of
the subspaces. Let us comment that in this case the absolute sum
can be written in a different form. Indeed, suppose we have a
Banach space $X$ and two subspaces $Y$ and $Z$ such that
$X=Y\oplus Z$ and, for every $y\in Y$ and $z\in Z$, the norm of
$y+z$ only depends on $\|y\|$ and $\|z\|$. In such a case, it is
known that there exists an absolute norm $|\cdot|$ on $\R^2$ such
that
$$
\|x+z\|=|(\|x\|,\|z\|)| \qquad \bigl(x\in X,\ z\in Z\bigr),
$$
i.e.\ $X\equiv [Y\oplus Z]_{E}$ for $E=(\R^2,|\cdot|)$ and so
Corollary~\ref{cor:One-unconditonalbasis-easy} applies. We refer
the reader to \cite[\S~21]{B-D2} and \cite{MPRY} for background.

\begin{corollary}\label{cor:EMA}
Let $X$ be a Banach space and let $Y$, $Z$ be closed subspaces of
$X$ such that $X=Y\oplus Z$ and, for every $y\in Y$ and $z\in Z$,
$\|y+z\|$ only depends on $\|y\|$ and $\|z\|$. Then
$$
n(X)\leqslant \min\big\{n(Y),\ n(Z)\big\}.
$$
\end{corollary}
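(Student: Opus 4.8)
The plan is to recognize Corollary~\ref{cor:EMA} as a direct specialization of the finite absolute-sum result already in hand. The hypothesis is precisely the classical characterization (quoted just above the statement, with references to \cite[\S~21]{B-D2} and \cite{MPRY}) that a decomposition $X = Y \oplus Z$ in which $\|y+z\|$ depends only on $\|y\|$ and $\|z\|$ forces the existence of an absolute norm $|\cdot|$ on $\R^2$ with $\|y+z\| = |(\|y\|,\|z\|)|$ for all $y\in Y$, $z\in Z$. Thus, writing $E = (\R^2, |\cdot|)$, we have an isometric identification $X \equiv [Y \oplus Z]_E$.

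With that identification made, the conclusion is immediate from Corollary~\ref{cor:One-unconditonalbasis-easy}(a) with $m=2$, $X_1 = Y$, $X_2 = Z$: that corollary gives
$$
n\bigl([Y \oplus Z]_E\bigr) \leq \min\{n(Y), n(Z)\},
$$
and since $n$ is an isometric invariant, $n(X) = n\bigl([Y\oplus Z]_E\bigr) \leq \min\{n(Y), n(Z)\}$. So the proof is essentially two sentences: invoke the characterization to produce $E$, then apply the earlier corollary.

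There is really no substantive obstacle here, since the heavy lifting was done in Theorem~\ref{thm:EMA-infinito} (whose proof via Lemma~\ref{lemma:Theorem93} handled the one-sided inequality for general absolute sums). The only point that deserves a word of care is the justification that the stated hypothesis genuinely yields an \emph{absolute} norm on $\R^2$ — i.e.\ a norm satisfying conditions (a) and (b) of Section~\ref{sec:absolute-sums} — rather than merely some norm on $\R^2$. Property (b) (normalization $|(1,0)| = |(0,1)| = 1$) follows by evaluating on unit vectors of $Y$ and $Z$, and property (a) (dependence only on absolute values of coordinates) is exactly the content of "depends only on $\|y\|$ and $\|z\|$" together with the fact that $\|-y\| = \|y\|$. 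This is standard and is precisely what the cited references establish, so I would simply cite them rather than reprove it. Hence the proof proposal is: state the identification $X \equiv [Y\oplus Z]_E$ with $E=(\R^2,|\cdot|)$ an absolute norm (citing \cite[\S~21]{B-D2}, \cite{MPRY}), and conclude by Corollary~\ref{cor:One-unconditonalbasis-easy}(a).
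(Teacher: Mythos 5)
Your argument is exactly the paper's: the text immediately preceding Corollary~\ref{cor:EMA} makes the identification $X\equiv[Y\oplus Z]_E$ for an absolute norm $E=(\R^2,|\cdot|)$ via the cited characterization, and then applies Corollary~\ref{cor:One-unconditonalbasis-easy}(a) with $m=2$. Your additional remark on verifying properties (a) and (b) of an absolute norm is a correct and harmless elaboration of what the references establish.
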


Let us comment that the above corollary already appeared in the
PhD dissertation (2000) of the first named author and was
published (in Spanish) in \cite[Proposici\'{o}n~1]{MarEMA}. Also,
Corollary~\ref{cor:c0-lp-sum-easyinequality} follows from the
above corollary since $c_0$-sums and $\ell_p$-sums are
associative (i.e.\ the whole sum is the $c_0$-sum or
$\ell_p$-sum of each summand and the sum of the rest of
summands). This was the way in which this result was proved in
\cite{M-P}. As a matter of facts, let us comment that the
unique associative absolute sums are $c_0$-sums and
$\ell_p$-sums \cite{Behetal}, and so
Theorem~\ref{thm:EMA-infinito} does not follow from the already
known Corollary~\ref{cor:EMA}.

It is natural to ask whether it would be possible that the
hypothesis of absoluteness in Corollary~\ref{cor:EMA} can be
weakened to general one-complemented subspaces, but we will show
that it is not possible. Moreover, we will show that the numerical
index of unconditional sums need not be smaller than the numerical
indices of the summands, even for projections associated to a
one-unconditional and one-symmetric norms in a three-dimensional
space. We recall that a closed subspace $Y$ of a Banach space $X$
is said to be an \emph{unconditional summand} of $X$ if there
exists another closed subspace $Z$ such that $X=Y\oplus Z$ and
$\|y+z\|=\|y + \theta z\|$ for every $y\in Y$, $z\in Z$ and
$|\theta|=1$. It is also said that $X$ is the \emph{unconditional
sum} of $Y$ and $Z$. When both $Y$ and $Z$ are one-dimensional, an
unconditional sum is actually an absolute sum, but this is not
true for higher dimensions.

\begin{example}\label{example:symmetric-1}
Let $X$ be the space $\R^3$ endowed with the norm
$$
\|(x,y,z)\|=\max\left\{\sqrt{x^2+y^2},\,\sqrt{x^2+z^2},\,
\sqrt{y^2+z^2}\right\} \qquad (x,y,z)\in\R^3.
$$
Then, the usual basis is one-unconditional and one-symmetric
for $X$, $n(X)>0$ but $n\bigl(P_2(X)\bigr)=0$ ($P_2$ is the
projection on the subspace of vectors supported on the first
two coordinates).
\end{example}

\begin{proof}
It is clear that $P_2(X)$ is isometrically isomorphic to the
two-dimensional Hilbert space, so $n\bigl(P_2(X)\bigr)=0$. Since
$X$ is finite-dimensional, to prove that $n(X)>0$ it is enough to
show that the unique operator $T\in L(X)$ with $v(T)=0$ is $T=0$.
Let $T$ be an operator with $v(T)=0$ represented by the matrix
$(a_{ij})$. Consider the following norm-one elements in $X$ and
$X^*$:
\begin{align*}
x_1&=(1,0,0), \quad x_2=(0,1,0), \quad x_3=(0,0,1), \quad
x_4=\frac{1}{\sqrt{2}}(1,1,0),\\
x_5&=\frac{1}{\sqrt{2}}(0,1,1), \quad
x_6=\frac{1}{\sqrt{2}}(1,0,1),\quad x_7=\frac{1}{\sqrt{2}}(1,-1,1),
\quad
x_8=\frac{1}{\sqrt{2}}(1,1,1)\\
x_1^*&=(1,0,0), \quad x_2^*=(0,1,0),\quad x_3^*=(0,0,1), \\
x_4^*&=\frac{1}{\sqrt{2}}(1,1,0),\quad
x_5^*=\frac{1}{\sqrt{2}}(0,1,1),\quad
x_6^*=\frac{1}{\sqrt{2}}(1,0,1),\quad
x_7^*=\frac{1}{3\sqrt{2}}(1,-1,0)+\frac{2}{3\sqrt{2}}(0,-1,1).
\end{align*}
Observe that $\|x_7^*\|\leq 1$ and $x_7^*(x_7)=1$.

Since $x_i^*(x_i)=1$ for $i=1,2,3$ we have that
$a_{ii}=x_i^*(Tx_i)=0$ for $i=1,2,3$. Analogously, using that
$$
x_4^*(x_4)=1=x_5^*(x_5)=x_6^*(x_6)=x^*_4(x_8)=x_6^*(x_8)=x_7^*(x_7)
$$
we obtain the following restraints on $a_{ij}$\,:
\begin{align*}
0&=x_4^*(Tx_4)=\frac12(a_{12}+a_{21}) \text{ which implies }
a_{21}=-a_{12},\\
0&=x_5^*(Tx_5)=\frac12(a_{23}+a_{32}) \text{ which implies }
a_{32}=-a_{23},\\
0&=x_6^*(Tx_6)=\frac12(a_{13}+a_{31}) \text{ which implies }
a_{31}=-a_{13},\\
0&=x_5^*(Tx_8)=\frac12 (-a_{12}-a_{13})\text{ which implies }
a_{13}=-a_{12},\\
0&=x_6^*(Tx_8)=\frac12(a_{12}-a_{23})\text{ which implies }
a_{23}=a_{12},\\
0&=x_7^*(Tx_7)=\frac13a_{12}.
\end{align*}
Therefore, we have that $T=0$.
\end{proof}

We do not know whether the example given above can be adapted to
the complex case. Nevertheless, we are able to show a complex
example with one-unconditional (not symmetric) norm.

\begin{example}\label{example:complex-unconditional}
Let us consider the normed space $X$ to be $\K^5$ ($\K=\R$ or
$\K=\C$) endowed with the norm
$$
\|(x_1,x_2,x_3,x_4,x_5)\|=\max\bigl\{|x_1|+|x_2|, |x_2|+|x_3|+|x_5|,|x_3|+|x_4|\bigr\}
\qquad (x_1,x_2,x_3,x_4,x_5)\in \K^5.
$$
Then, the usual basis is one-unconditional for $X$, $n(X)=1$ and
$n\bigl(P_4(X)\bigr)<1$.
\end{example}

\begin{proof}
For the real case, it was proved in \cite[\S 3]{Reis} that $X$ is
a so-called CL-space and that $P_4(X)$ is not. In the
finite-dimensional case, this is equivalent to say that $n(X)=1$
and $n\bigl(P_4(X)\bigr)<1$ (see \cite[\S 3]{KaMaPa}, for
instance). For the complex case, it was shown in
\cite[Proposition~4.3]{Kim-LeeJFA} that the (natural)
complexification of a $n$-dimensional normed space with absolute
norm (i.e.\ the usual basis of $\R^n$ is one-unconditional) is a
CL-space if and only if the real version is a CL-space. Therefore,
$X$ is a (complex) CL-space and $P_4(X)$ is not. As in the real
case, this gives that $n(X)=1$ and $n\bigl(P_4(X)\bigr)<1$.
\end{proof}

Using the continuity of the numerical index with respect to the
Banach-Mazur distance for equivalent norms \cite{F-M-P}, it is
possible to obtain examples as in \ref{example:symmetric-1} and
\ref{example:complex-unconditional} which are uniformly convex and
uniformly smooth.

\pagebreak[2]

\begin{examples}$ $\label{example:symmetric-smooth}
\begin{itemize}
\item[(a)] \textsc{Real case:\ } For $p\geq1$, we consider
    $X_p=(\R^3, \|\cdot\|_{(p)})$ where
$$
\|(x,y,z)\|_{(p)}=2^{-\frac{1}{p}}\,\left((x^2+y^2)^\frac{p}{2}+(x^2+z^2)^\frac{p}{2} +
(y^2+z^2)^\frac{p}{2}\right)^\frac1p
\qquad (x,y,z)\in\R^3.
$$
Then, the usual basis is one-unconditional and one-symmetric
for every $X_p$ and $P_2(X_p)\equiv (\R^2, \|\cdot\|_{(p)})$
where
$$
\|(x,y)\|_{(p)}=2^{-\frac{1}{p}}\left((x^2+y^2)^\frac{p}{2}+|x|^p+|y|^p\right)^\frac1p
\qquad (x,y)\in\R^2.
$$
By the continuity of the numerical index with respect to the
Banach-Mazur distance for equivalent norms
\cite[Proposition~2]{F-M-P}, one has that
$$
\lim_{p\rightarrow \infty} n(Y_p)=n(X)>0 \qquad \text{and} \qquad
\lim_{p\rightarrow \infty}
n\bigl(P_2(Y_p)\bigr)=n(H_2)=0
$$
where $X$ is the three-dimensional space constructed in
Example~\ref{example:symmetric-1} and $H_2$ is the
two-dimensional real Hilbert space.

Therefore, for $p$ big enough, the uniformly convex and
uniformly smooth space $X_p$ satisfies
$$
n(X_p)>\frac12 n(X)\qquad \text{and} \qquad
n\bigl(P_2(Y_p)\bigr)<\frac12 n(X).
$$
Let us comment that the spaces $X_p$ are actually Lorentz
spaces.
\item[(b)] \textsc{Complex case:\ } Using the same kind of
    tricks that the ones given above, it is possible to
    produce uniformly convex and uniformly smooth versions of
    Example~\ref{example:complex-unconditional}. Therefore,
    there is a $5$-dimensional complex uniformly convex and
    uniformly smooth normed space $X$ with one-unconditional
    basis such that $n(X)>n\bigl(P_4(X)\bigr)$.
\end{itemize}
\end{examples}

\section{K\"{o}the-Bochner function spaces}\label{sec:kothe}
Let $(\Omega,\Sigma,\mu)$ be a complete $\sigma$-finite measure
space. We denote by $L_0(\mu)$ the space of all (equivalent
classes modulo equality a.e.\ of) $\Sigma$-measurable locally
integrable real-valued functions on $\Omega$. A \emph{K\"{o}the
function space} is a linear subspace $E$ of $L_0(\mu)$ endowed
with a complete norm $\|\cdot\|_E$ and satisfying the following
conditions.
\begin{enumerate}
\item[(i)] If $|f|\leq |g|$ a.e.\ on $\Omega$, $g\in E$ and
    $f\in L_0$, then $f\in E$ and $\|f\|_E\leq \|g\|_E$.
\item[(ii)] For every $A\in \Sigma$ with $0<\mu(A)<\infty$,
    the characteristic function $\chi_A$ belongs to $E$.
\end{enumerate}
We refer the reader to the classical book by J.~Lindenstrauss and
L.~Tzafriri \cite{LTII} for more information and background on
K\"{o}the function spaces. Let us recall some useful facts about these
spaces which we will use in the sequel. First, $E$ is a Banach
lattice in the pointwise order. The \emph{K\"{o}the dual} $E'$ of $E$
is the function space defined as
$$
E'=\left\{g\in L_0(\mu)\ : \ \|g\|_{E'}:=
\sup_{f\in B_E}\int_\Omega |fg|\,d\mu<\infty \right\},
$$
which is again a K\"{o}the space on $(\Omega,\Sigma,\mu)$. Every
element $g\in E'$ defines naturally a continuous linear functional
on $E$ by the formula
$$
f\longmapsto \int_\Omega fg\, d\mu \qquad (f\in E),
$$
so we have $E'\subseteq E^*$ and this inclusion is isometric.

Let $E$ be a K\"{o}the space on a complete $\sigma$-finite measure
space $(\Omega,\Sigma,\mu)$ and let $X$ be a real or complex
Banach space. A function $f:\Omega\longrightarrow X$ is said to be
\emph{simple} if $f=\sum_{i=1}^n x_i\,\chi_{A_i}$ for some
$x_1,\ldots,x_n\in X$ and some $A_1,\ldots,A_n\in \Sigma$. The
function $f$ is said to be \emph{strongly measurable} if there is
a sequence of simple functions $\{f_n\}$ with $\lim
\|f_n(t)-f(t)\|_{X}=0$ for almost all $t\in \Omega$. We write
$E(X)$ for the space of those strongly measurable functions
$f:\Omega \longrightarrow X$ such that the function
$$
t\longmapsto \|f(t)\|_X \qquad (t\in \Omega)
$$
belongs to $E$, and we endow $E(X)$ with the norm
$$
\|f\|_{E(X)}=\bigl\|t\longmapsto \|f(t)\|_X\bigr\|_E.
$$
Then $E(X)$ is a real or complex (depending on $X$) Banach space
and it is called a \emph{K\"{o}the-Bochner function space}. We refer
the reader to the recent book by P.-K.~Lin \cite{LinKothe} for
background. Let us introduce some notation and recall some useful
facts which we will use in the sequel. For an element $f\in E(X)$
we write $|f|\in E$ for the function $|f|(\cdot)=\|f(\cdot)\|_X$,
we consider a measurable function $\widetilde{f}:\Omega
\longrightarrow S_X$ such that $f=|f|\,\widetilde{f}$ a.e.\ and we
observe that $\|f\|_{E(X)}=\|\,|f|\,\|_E$.

We write $E'(X^*,w^*)$ to denote the space of $w^*$-scalarly
measurable functions $\Phi:\Omega\longrightarrow X^*$ such that
$\|\Phi(\cdot)\|_{X^*}\in E'$, which act on $E(X)$ as integral
functionals:
$$
\langle \Phi,f\rangle = \int_\Omega \langle \Phi(t),f(t)\rangle\,d\mu(t) \qquad \bigl(f\in E(X)\bigr).
$$
For an integral functional $\Phi\in E'(X^*,w^*)$, we write
$|\Phi|\in E'$ for the function
$|\Phi|(\cdot)=\|\Phi(\cdot)\|_{X^*}$ and we consider a
$w^*$-scalarly measurable function $\widetilde{\Phi}:\Omega
\longrightarrow S_{X^*}$ such that
$\Phi=|\Phi|\,\widetilde{\Phi}$.

Our first result gives an inequality between the numerical index
of $E(X)$ and $n(X)$, provided that there are sufficiently many
integral functionals.

\begin{theorem}\label{theorem:Kothe-vector}
Let $(\Omega,\Sigma,\mu)$ be a complete $\sigma$-finite measure
space, let $E$ be a K\"{o}the space on $(\Omega,\Sigma,\mu)$, and let
$X$ be a Banach space. Suppose that there is a dense subset
$\mathcal{A}$ of $S_{E(X)}$ such that for every $f\in \mathcal{A}$
there is $\Phi_f\in E'(X^*,w^*)$ satisfying
$\|\,|\Phi_f|\,\|_{E'}=\langle \Phi_f\, , \, f\rangle=1$. Then
$$
n\bigl(E(X)\bigr)\leq n(X).
$$
\end{theorem}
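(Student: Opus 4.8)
The plan is to mimic the proof of Theorem~\ref{thm:EMA-infinito}, transporting an operator $S\in L(X)$ to an operator $T\in L(E(X))$ by letting it act fiberwise. Concretely, given $S\in L(X)$, I would define $T\in L(E(X))$ by $[T f](t) = S(f(t))$ for $f\in E(X)$ and $t\in\Omega$; since $E$ is a K\"othe space and $|Tf|(\cdot) = \|S(f(\cdot))\|_X \leq \|S\|\,|f|(\cdot)$ pointwise, property (i) of a K\"othe space guarantees $Tf\in E(X)$ with $\|T\|\leq\|S\|$. For the reverse norm inequality, testing $T$ on functions of the form $x\,\chi_A/\|\chi_A\|_E$ for $x\in S_X$ and $A\in\Sigma$ with $0<\mu(A)<\infty$ recovers $\|S\|$, so $\|T\|=\|S\|$.

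The core of the argument is the claim $v(T)\leq v(S)$, and here is where the hypothesis on $\mathcal{A}$ and the integral functionals $\Phi_f$ enters. By Lemma~\ref{lemma:Theorem93}, since $\mathcal{A}$ is dense in $S_{E(X)}$ and each $\Phi_f$ satisfies $\langle\Phi_f,f\rangle=1$ with $\|\,|\Phi_f|\,\|_{E'}=1$ (so $(f,\Phi_f)\in\Pi(E(X))$ after identifying $\Phi_f$ with its action as a norm-one functional), we have $v(T)=\sup\{|\langle\Phi_f,Tf\rangle| : f\in\mathcal{A}\}$. For a fixed $f\in\mathcal{A}$, write $f=|f|\,\widetilde{f}$ and $\Phi_f=|\Phi_f|\,\widetilde{\Phi}_f$ as in the notational setup. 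Then
\begin{align*}
|\langle\Phi_f,Tf\rangle| &= \left|\int_\Omega \langle\Phi_f(t), S(f(t))\rangle\,d\mu(t)\right|
= \left|\int_\Omega |\Phi_f|(t)\,|f|(t)\,\bigl\langle\widetilde{\Phi}_f(t), S(\widetilde{f}(t))\bigr\rangle\,d\mu(t)\right|.
\end{align*}
For almost every $t$, the pair $(\widetilde{f}(t),\widetilde{\Phi}_f(t))$ lies in $\Pi(X)$, so $|\langle\widetilde{\Phi}_f(t),S(\widetilde{f}(t))\rangle|\leq v(S)$. Hence the integral is bounded by $v(S)\int_\Omega|\Phi_f|(t)\,|f|(t)\,d\mu(t) \leq v(S)\,\|\,|\Phi_f|\,\|_{E'}\,\|\,|f|\,\|_E = v(S)$, using the K\"othe duality inequality and the normalizations. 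Taking the supremum over $f\in\mathcal{A}$ gives $v(T)\leq v(S)$.

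Finally, as in Theorem~\ref{thm:EMA-infinito}, chaining $v(S)\geq v(T)\geq n(E(X))\,\|T\| = n(E(X))\,\|S\|$ and letting $S$ range over all of $L(X)$ yields $n(X)\geq n(E(X))$. The main obstacle I anticipate is the measurability bookkeeping: verifying that $t\mapsto S(f(t))$ is strongly measurable when $f$ is (immediate since $S$ is continuous and bounded, so it maps a.e.-limits of simple functions to a.e.-limits of simple functions), and, more delicately, justifying that the scalar function $t\mapsto\langle\widetilde{\Phi}_f(t),S(\widetilde{f}(t))\rangle$ is measurable so that the pointwise bound $v(S)$ can be integrated — this follows from $w^*$-scalar measurability of $\widetilde{\Phi}_f$ together with strong measurability of $t\mapsto S(\widetilde f(t))$. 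One should also be slightly careful that $\widetilde{f}(t)$ is only defined a.e.\ on the set where $|f|(t)\neq 0$, but the factor $|\Phi_f|(t)\,|f|(t)$ vanishes elsewhere, so the integrand is well-defined a.e.
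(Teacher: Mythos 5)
Your construction of $T$, the computation $\|T\|=\|S\|$, and the appeal to Lemma~\ref{lemma:Theorem93} via the pairs $(f,\Phi_f)$ all match the paper's proof step for step. The one place where your argument has a genuine hole is the unexplained assertion that for almost every $t$ the pair $(\widetilde{f}(t),\widetilde{\Phi}_f(t))$ lies in $\Pi(X)$. This is not part of the notational setup: $\widetilde{f}(t)\in S_X$ and $\widetilde{\Phi}_f(t)\in S_{X^*}$ by construction, but nothing in the definitions forces $\langle\widetilde{\Phi}_f(t),\widetilde{f}(t)\rangle=1$, and without that the pointwise bound $|\langle\widetilde{\Phi}_f(t),S(\widetilde{f}(t))\rangle|\leq v(S)$ is simply false (for an arbitrary pair of unit vectors one only gets the bound $\|S\|$, which would prove nothing). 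This is precisely the point at which the hypothesis $\|\,|\Phi_f|\,\|_{E'}=\langle \Phi_f,f\rangle=1$ must be invoked, through an equality-case argument: since
$$
1=\langle \Phi_f,f\rangle=\int_\Omega |\Phi_f|(t)\,|f|(t)\,\langle\widetilde{\Phi}_f(t),\widetilde{f}(t)\rangle\,d\mu(t)
\leq \int_\Omega |\Phi_f|(t)\,|f|(t)\,d\mu(t)\leq \|\,|\Phi_f|\,\|_{E'}\,\|\,|f|\,\|_{E}=1,
$$
all inequalities are equalities, which yields both $\langle\widetilde{\Phi}_f(t),\widetilde{f}(t)\rangle=1$ for almost every $t$ in the set where $|\Phi_f|(t)\,|f|(t)>0$ (which suffices, since your integrand carries that factor) and $\int_\Omega|\Phi_f|\,|f|\,d\mu=1$. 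Once this is inserted your proof coincides with the paper's; the measurability remarks at the end are fine and consistent with the paper's (implicit) treatment.
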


\begin{proof}
We take an operator $S\in L(X)$ with $\|S\|=1$, and define $T\in
L(E(X))$ by
$$
[T(f)](t)=S(f(t))\ \Bigl[=|f|(t)\,S(\widetilde{f}(t))\Bigr]
\qquad \bigl(t\in\Omega,\ f\in E(X)\bigr).
$$
We claim that $T$ is well-defined and $\|T\|=1$. Indeed, for $f\in
E(X)$, $T(f)$ is strongly measurable and
$$
\|[T(f)](t)\|_{X} = |f|(t)\,\|S(\widetilde{f}(t))\| \leq |f|(t) \qquad (t\in \Omega),
$$
so $T(f)\in E(X)$ with $\|T(f)\|_{E(X)} \leq \|\,|f|\,\|_E
=\|f\|_{E(X)}$. This gives $\|T\|\leq 1$. Conversely, we fix $A\in
\Sigma$ such that $0<\mu(A)<\infty$ and for each $x\in S_X$
consider $f=\|\chi_A\|_E^{-1}\,x\,\chi_A\in S_{E(X)}$. Then,
$\|f\|=1$ and
$$
\|[T(f)](t)\|_{X} =\frac{\chi_A(t)\|S(x)\|_X}{\|\chi_A\|_E}, \quad
\text{ so } \quad \|T\|\geq \|T(f)\|_{E(X)} =
\left\|\frac{\chi_A\,\|S(x)\|_X}{\|\chi_A\|_E}\right\|_E \geq \|S(x)\|_X.
$$
By just taking supremum on $x\in S_X$, we get $\|T\|\geq \|S\|=1$
as desired.

Next, we consider $f\in \mathcal{A}$ and observe that
\begin{align*}
1=\langle \Phi_f , f\rangle & = \int_\Omega \langle \Phi_f(t),f(t)\rangle\, d\mu(t) =
 \int_\Omega |\Phi_f|(t)\,|f|(t)\,\langle \widetilde{\Phi_f}(t),\widetilde{f}(t)\rangle\ d\mu(t) \\
 & \leq \int_\Omega |\Phi_f|(t)\,|f|(t)\ d\mu(t) \leq \langle |\Phi_f|,|f|\rangle
 \leq \|\,|\Phi_f|\,\|_{E'}\,\|\,|f|\,\|_{E}=1.
\end{align*}
It follows that
\begin{equation*}
\langle \widetilde{\Phi_f}(t),\widetilde{f}(t)\rangle = 1 \ \
\text{a.e.} \qquad \text{and} \qquad \int_\Omega
|\Phi_f|(t)\,|f|(t)\ d\mu(t)=1.
\end{equation*}
On the other hand,
\begin{align*}
\left|\langle \Phi_f, T(f)\rangle\right| & =
\left|\int_\Omega \langle \Phi_f(t),S(f(t))\rangle\ d\mu(t) \right|  = \left|\int_\Omega |\Phi_f|(t)\,|f|(t)\,
\langle \widetilde{\Phi_f}(t),S(\widetilde{f}(t))\rangle\ d\mu(t) \right| \\
 & \leq \int_\Omega |\Phi_f|(t)\,|f|(t)\,
\bigl|\langle \widetilde{\Phi_f}(t),S(\widetilde{f}(t))\rangle\bigr|\ d\mu(t) \leq
\int_\Omega |\Phi_f|(t)\,|f|(t)\,v(S)\, d\mu(t)=v(S).
\end{align*}
Since $\mathcal{A}$ is dense in the unit sphere of $E(X)$, it
follows from Lemma~\ref{lemma:Theorem93} that the above inequality
implies that $v(T)\leq v(S)$. Therefore, $n\bigl(E(X)\bigr) \leq
v(S)$. In view of the arbitrariness of $S\in L(X)$ with $\|S\|=1$,
we get $n\bigl(E(X)\bigr) \leq n(X)$, as desired.
\end{proof}

The main application of the above theorem concerns order
continuous K\"{o}the spaces. We say that a K\"{o}the space $E$ is
\emph{order continuous} if $0\leq x_\alpha \downarrow 0$ and
$x_\alpha\in E$ imply that $\lim \|x_\alpha\|=0$ (this is known
to be equivalent to the fact that $E$ does not contain an
isomorphic copy of $\ell_\infty$). If $E$ is order continuous,
the inclusion $E'\subseteq E^*$ is surjective (so $E^*$
completely identifies with $E'$) and the set of those simple
functions belonging to $E(X)$ is norm-dense in $E(X)$.

\begin{corollary}\label{cor:Kothe-vector}
Let $(\Omega,\Sigma,\mu)$ be a complete $\sigma$-finite measure
space and let $E$ be an order continuous K\"{o}the space. Then, for
every Banach space $X$,
$$
n\bigl(E(X)\bigr)\leq n(X).
$$
\end{corollary}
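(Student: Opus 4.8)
The plan is to deduce the result directly from Theorem~\ref{theorem:Kothe-vector}; the whole task is to produce, for a dense family of norm-one functions in $E(X)$, the required integral norming functionals. Since $E$ is order continuous, the simple functions belonging to $E(X)$ are norm-dense in $E(X)$, and hence the set $\mathcal{A}$ of \emph{norm-one} simple functions is dense in $S_{E(X)}$ (if $f\in S_{E(X)}$ and $f_n\to f$ with each $f_n$ simple, then $f_n\neq 0$ for $n$ large and $f_n/\|f_n\|_{E(X)}\in\mathcal{A}$ converges to $f$). This is the set $\mathcal{A}$ I would feed into Theorem~\ref{theorem:Kothe-vector}.

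Fix $f\in\mathcal{A}$ and write it as $f=\sum_{i=1}^n x_i\,\chi_{A_i}$ with $x_i\in X\setminus\{0\}$ and $A_1,\dots,A_n\in\Sigma$ pairwise disjoint of finite positive measure (any simple function can be brought to this form). Then $|f|=\sum_{i=1}^n \|x_i\|_X\,\chi_{A_i}$ is a norm-one simple function in $E$. Because $E$ is order continuous we have $E^*=E'$, so Hahn--Banach provides $g\in S_{E'}$ with $\int_\Omega |f|\,g\,d\mu=1$; replacing $g$ by $|g|\,\chi_{A_1\cup\cdots\cup A_n}$ (which still has $E'$-norm $1$, since that norm is absolute, and still satisfies $\int_\Omega|f|\,g\,d\mu=1$, since $|f|\geq0$ is supported on $\bigcup_i A_i$) we may assume $g\geq 0$ and $\supp g\subseteq\bigcup_{i=1}^n A_i$. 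For each $i$ pick, again by Hahn--Banach, a functional $x_i^*\in S_{X^*}$ with $x_i^*(x_i)=\|x_i\|_X$, and define $\Phi_f\colon\Omega\to X^*$ by $\Phi_f(t)=g(t)\,x_i^*$ for $t\in A_i$ ($i=1,\dots,n$) and $\Phi_f(t)=0$ otherwise.

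Then $\Phi_f$ is $w^*$-scalarly measurable (for each $x\in X$ the function $t\mapsto\langle\Phi_f(t),x\rangle=\sum_{i=1}^n g(t)\,x_i^*(x)\,\chi_{A_i}(t)$ is measurable), and $|\Phi_f|=g\in S_{E'}$, so $\Phi_f\in E'(X^*,w^*)$ with $\|\,|\Phi_f|\,\|_{E'}=1$. Finally,
\[
\langle\Phi_f,f\rangle=\int_\Omega g(t)\sum_{i=1}^n x_i^*(x_i)\,\chi_{A_i}(t)\,d\mu(t)
=\int_\Omega g(t)\sum_{i=1}^n\|x_i\|_X\,\chi_{A_i}(t)\,d\mu(t)=\int_\Omega g\,|f|\,d\mu=1 .
\]
Thus the hypothesis of Theorem~\ref{theorem:Kothe-vector} is satisfied by this $\mathcal{A}$, and the theorem yields $n\bigl(E(X)\bigr)\leq n(X)$.

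There is no serious obstacle here: the two substantive inputs --- that $E^*=E'$ and that simple functions are dense in $E(X)$ when $E$ is order continuous --- are exactly the facts recalled in the paragraph preceding the statement, so they may be quoted. The only points requiring a little care are the normalisation of the approximating simple functions so that they lie on $S_{E(X)}$, the reduction to a nonnegative norming functional $g$ supported on $\bigcup_i A_i$, and the verification that the resulting $\Phi_f$ is a genuine element of $E'(X^*,w^*)$ (i.e.\ $w^*$-scalar measurability together with $|\Phi_f|\in E'$).
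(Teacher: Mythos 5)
Your proof is correct and follows essentially the same route as the paper: take $\mathcal{A}$ to be the norm-one simple functions (dense by order continuity), use $E^*=E'$ to produce a nonnegative norming element $g\in S_{E'}$ for $|f|$, attach norming functionals $x_i^*$ on each piece to build $\Phi_f\in E'(X^*,w^*)$, and invoke Theorem~\ref{theorem:Kothe-vector}. Your extra care about positivity of $g$, its support, and $w^*$-scalar measurability only makes explicit what the paper leaves implicit.
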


\begin{proof}
We consider $\mathcal{A}$ to denote the set of norm-one simple
functions belonging to $E(X)$. Consider $f\in\mathcal{A}$, which
is of the form
\begin{equation*}
f = \sum\limits_{j=1}^m a_j \,x_j\, \chi_{{A_j}} \in
E(X),
\end{equation*}
where $m \in \mathbb N$,  $a_j \geq 0$, $x_j\in S_{X}$,
$A_1,\dots, A_m\in\Sigma$ are pairwise disjoint, and $|f|=\sum a_j
\chi_{A_j}\in S_E$. Since $E$ is order-continuous, $E^*= E'$ and
so we may find a positive function $\varphi \in E'$ with
$\|\varphi\|_{E'}=1$ such that
$$
1=\langle f,\varphi \rangle = \int_\Omega f\varphi\,d\mu =
 \sum_{j=1}^m \int_{A_j} \varphi(t) a_j \, d\mu(t).
$$
Now, for $j=1,\ldots,m$, we choose $x_j^*\in S_{X^*}$ such that
$x_j^*(x_j)=1$ and consider $\Phi_f\in E'(X^*,w^*)$ defined by
\begin{equation*}
\langle \Phi_f, g \rangle =\sum_{j=1}^m \int_{A_j} \varphi(t) x_j^*\bigl(g(t)\bigr)\,d\mu(t)
\qquad \bigl(g\in E(X)\bigr).
\end{equation*}
Then, we have $\|\,|\Phi|\,\|_{E'}=\|\varphi\|_{E'}=1$ and also
\begin{equation*}
\langle \Phi_f,f\rangle = \sum_{j=1}^m \int_{A_j} \varphi(t) a_j x_j^*(x_j)\, d\mu(t) =
\sum_{j=1}^m \int_{A_j} \varphi(t) a_j \, d\mu(t)=1.
\end{equation*}
Since $E$ is order continuous, $\mathcal{A}$ is dense in
$S_{E(X)}$ and we may apply Theorem~\ref{theorem:Kothe-vector}.

Alternatively, we can prove this corollary by using the deep
result of the theory of K\"{o}the-Bochner spaces that for an order
continuous K\"{o}the space $E$ and a Banach space $X$, the whole
$E(X)^*$ identifies isometrically with $E'(X^*,w^*)$ (see
\cite[Theorem~3.2.4]{LinKothe}) and, therefore, for every $f\in
S_{E(X)}$ there is a norm-one element $\Phi_f$ of $E'(X^*,w^*)$
such that $\langle \Phi_f, f\rangle=1$.
\end{proof}

Since $L_p(\mu)$ spaces are order continuous K\"{o}the spaces for
$1\leq p <\infty$, as an immediate consequence of
Corollary~\ref{cor:Kothe-vector} we obtain the following
corollary. For $p=1$ it appeared in \cite{M-P} and for
$1<p<\infty$ it appeared in \cite{Eddari-Khamsi-Aksoy}.

\begin{corollary}\label{cor:Lp-vector}
Let $(\Omega,\Sigma,\mu)$ be a complete $\sigma$-finite measure
space, $1 \leq p<\infty$, and let $X$ be a Banach space. Then
$$
n\bigl(L_p(\mu,X)\bigr)\leq n(X).
$$
\end{corollary}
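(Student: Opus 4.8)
The plan is to deduce this directly from Corollary~\ref{cor:Kothe-vector}, so the only thing that needs checking is that $L_p(\mu)$ is an order-continuous K\"othe space on a complete $\sigma$-finite measure space. First I would note that $E=L_p(\mu)$ for $1\leq p<\infty$ is a K\"othe function space in the sense of Section~\ref{sec:kothe}: condition (i) is the ideal property, which holds because $|f|\leq|g|$ a.e.\ and $g\in L_p(\mu)$ force $f\in L_p(\mu)$ with $\|f\|_p\leq\|g\|_p$; and condition (ii) holds because for $A\in\Sigma$ with $0<\mu(A)<\infty$ we have $\chi_A\in L_p(\mu)$ with $\|\chi_A\|_p=\mu(A)^{1/p}<\infty$. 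Moreover $L_p(\mu,X)$ is exactly $E(X)$ in the notation of the section, since the norm of $f$ is $\bigl(\int_\Omega\|f(t)\|_X^p\,d\mu\bigr)^{1/p}=\bigl\|t\mapsto\|f(t)\|_X\bigr\|_{L_p(\mu)}$.

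Next I would verify order continuity of $E=L_p(\mu)$ for $1\leq p<\infty$: if $0\leq f_\alpha\downarrow 0$ in $L_p(\mu)$, then by monotone (or dominated) convergence $\int f_\alpha^p\,d\mu\downarrow 0$, so $\|f_\alpha\|_p\to 0$. Equivalently, $L_p(\mu)$ with $p<\infty$ does not contain an isomorphic copy of $\ell_\infty$. With order continuity established, Corollary~\ref{cor:Kothe-vector} applies verbatim and yields $n\bigl(L_p(\mu,X)\bigr)=n\bigl(E(X)\bigr)\leq n(X)$, which is the assertion.

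There is no real obstacle here; the statement is a specialization, and the only mild point to get right is the bookkeeping identifying $L_p(\mu,X)$ with the K\"othe-Bochner space $E(X)$ for $E=L_p(\mu)$ and confirming the $\sigma$-finiteness and completeness hypotheses are exactly those assumed in the statement. For $p=1$ one could alternatively invoke the known fact $n\bigl(L_1(\mu,X)\bigr)=n(X)$ directly, but it is cleanest to treat all $1\leq p<\infty$ uniformly through Corollary~\ref{cor:Kothe-vector}.

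\begin{proof}
Fix $1\leq p<\infty$. The space $E=L_p(\mu)$ is a K\"othe function space on the complete $\sigma$-finite measure space $(\Omega,\Sigma,\mu)$: indeed, if $|f|\leq|g|$ a.e.\ with $g\in L_p(\mu)$ and $f\in L_0(\mu)$, then $\int_\Omega|f|^p\,d\mu\leq\int_\Omega|g|^p\,d\mu<\infty$, so $f\in L_p(\mu)$ with $\|f\|_E\leq\|g\|_E$; and for $A\in\Sigma$ with $0<\mu(A)<\infty$ we have $\chi_A\in L_p(\mu)$ with $\|\chi_A\|_E=\mu(A)^{1/p}<\infty$. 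Moreover, $E$ is order continuous: if $f_\alpha\in E$ with $0\leq f_\alpha\downarrow 0$, then $f_\alpha^p\downarrow 0$ pointwise a.e.\ with $f_\alpha^p\leq f_{\alpha_0}^p\in L_1(\mu)$ for a fixed index $\alpha_0$, so the dominated convergence theorem gives $\|f_\alpha\|_E^p=\int_\Omega f_\alpha^p\,d\mu\to 0$.

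Finally, for a Banach space $X$ and $f\in E(X)$, the norm of $f$ in $E(X)$ is $\bigl\|t\mapsto\|f(t)\|_X\bigr\|_E=\bigl(\int_\Omega\|f(t)\|_X^p\,d\mu(t)\bigr)^{1/p}$, which is precisely the norm of $L_p(\mu,X)$; thus $E(X)=L_p(\mu,X)$ isometrically. Applying Corollary~\ref{cor:Kothe-vector} to the order continuous K\"othe space $E=L_p(\mu)$ yields
$$
n\bigl(L_p(\mu,X)\bigr)=n\bigl(E(X)\bigr)\leq n(X),
$$
as desired.
\end{proof}
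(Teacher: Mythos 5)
Your proof is correct and follows the paper's own route exactly: the paper obtains this corollary in one line by observing that $L_p(\mu)$ for $1\leq p<\infty$ is an order continuous K\"{o}the function space and invoking Corollary~\ref{cor:Kothe-vector}. You merely write out the routine verifications (the K\"{o}the axioms, order continuity, and the identification $E(X)=L_p(\mu,X)$) that the paper leaves implicit.
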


The K\"{o}the space $L_\infty(\mu)$ is not order continuous in the
infinite-dimensional case, so Corollary~\ref{cor:Kothe-vector}
does not cover this case. Anyway, we may apply directly
Theorem~\ref{theorem:Kothe-vector} to get the corresponding
result. The following statement was proved in
\cite[Theorem~2.3]{M-V}.

\begin{corollary}
Let $(\Omega,\Sigma,\mu)$ be a complete $\sigma$-finite measure
space and let $X$ be a Banach space. Then
$$
n\bigl(L_\infty(\mu,X)\bigr)\leq n(X).
$$
\end{corollary}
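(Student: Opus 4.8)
The plan is to apply Theorem~\ref{theorem:Kothe-vector} directly to $E=L_\infty(\mu)$, whose K\"othe dual is $E'=L_1(\mu)$, so that $E'(X^*,w^*)$ consists of the $w^*$-scalarly measurable maps $\Phi\colon\Omega\to X^*$ with $\|\Phi(\cdot)\|_{X^*}\in L_1(\mu)$. What has to be produced is a dense subset $\mathcal{A}$ of $S_{L_\infty(\mu,X)}$ together with, for each $f\in\mathcal{A}$, an integral functional $\Phi_f$ satisfying $\|\,|\Phi_f|\,\|_{L_1}=\langle\Phi_f,f\rangle=1$. Since $L_\infty(\mu)$ is not order continuous when $\mu(\Omega)=\infty$, the simple functions used in Corollary~\ref{cor:Kothe-vector} are no longer dense, so I would instead take $\mathcal{A}$ to be the set of those $f\in S_{L_\infty(\mu,X)}$ which are constant equal to a unit vector on some patch of finite positive measure: i.e.\ for which there exist $A\in\Sigma$ with $0<\mu(A)<\infty$ and $x_0\in S_X$ with $f=x_0$ a.e.\ on $A$.

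Granting density, the functionals are immediate: for such $f$ pick $x_0^*\in S_{X^*}$ with $x_0^*(x_0)=1$ and set $\Phi_f:=\mu(A)^{-1}\,x_0^*\,\chi_A$. Then $|\Phi_f|=\mu(A)^{-1}\chi_A$ has $L_1$-norm $1$, and $\langle\Phi_f,f\rangle=\mu(A)^{-1}\int_A x_0^*(x_0)\,d\mu=1$, so the hypothesis of Theorem~\ref{theorem:Kothe-vector} is verified for every element of $\mathcal{A}$ and the theorem yields $n\bigl(L_\infty(\mu,X)\bigr)\leq n(X)$.

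The one non-routine point, and the main obstacle, is the density of $\mathcal{A}$. Given $f\in S_{L_\infty(\mu,X)}$ and $\eps>0$, the set $B_0=\{t:\|f(t)\|_X>1-\eps/4\}$ has positive measure because $\|f\|_\infty=1$, and by $\sigma$-finiteness I may shrink it to have finite positive measure. As $f$ is strongly measurable it is essentially separably valued, so covering its essential range by countably many balls of radius $\eps/4$ I can select one ball $\bar B(y_0,\eps/4)$ whose $f$-preimage meets $B_0$ in a set $A$ of positive (hence finite) measure. On $A$ one has $\|f(t)-y_0\|\leq\eps/4$ and $\|f(t)\|>1-\eps/4$, whence $\bigl|\,\|y_0\|-1\,\bigr|<\eps/2$; putting $x_0=y_0/\|y_0\|\in S_X$ gives $\|f(t)-x_0\|<\eps$ for a.e.\ $t\in A$. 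Then $g:=x_0\,\chi_A+f\,\chi_{\Omega\setminus A}$ is strongly measurable, has $\|g(t)\|=1$ on $A$ and $\|g(t)\|\leq1$ off $A$ so that $\|g\|_\infty=1$, lies in $\mathcal{A}$, and $\|f-g\|_\infty=\|(f-x_0)\chi_A\|_\infty<\eps$. This establishes the density of $\mathcal{A}$ and completes the argument.
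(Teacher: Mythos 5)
Your proposal is correct and follows essentially the same route as the paper: it applies Theorem~\ref{theorem:Kothe-vector} to $E=L_\infty(\mu)$ with the same dense set of norm-one functions that are constant equal to a unit vector on a patch of finite positive measure, and the same averaged functionals $\Phi_f=\mu(A)^{-1}x_0^*\chi_A$. The only difference is that you spell out the density argument (via essential separability of the range and $\sigma$-finiteness), which the paper merely asserts; your verification of that step is sound.
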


\begin{proof}
Since every element of $L_\infty(\mu,X)$ has essentially separable
range, it is possible to show that the subset
$$
\mathcal{A}=\{x\, \chi_A + g\, \chi_{\Omega\setminus A}\ : \ x\in S_X,\ g\in
B_{L_\infty(\mu,X)},\ A\in \Sigma\ \text{with}\ 0<\mu(A)<\infty\}
$$
is dense in the unit sphere of $L_\infty(\mu,X)$. For every $f\in
\mathcal{A}$, write $f=x\,\chi_A + g\chi_{\Omega\setminus A}$,
pick $x^*\in S_{X^*}$ such that $x^*(x)=1$ and observe that the
function
$$
\Phi_f=\frac{1}{\mu(A)}\,x^*\,\chi_A
$$
belongs to $L_1(\mu,X^*)\subset [L_\infty(\mu)]'(X^*,w^*)$, has
norm one and $\langle \Phi_f,f\rangle=1$. Then, the hypotheses of
Theorem~\ref{theorem:Kothe-vector} are satisfied and so
$n\bigl(L_\infty(\mu,X)\bigr)\leq n(X)$.
\end{proof}

For $E=L_1(\mu)$ and $E=L_\infty(\mu)$, it is actually known that
$n\bigl(E(X)\bigr)=n(X)$ for every Banach space $X$
\cite{M-P,M-V}. It would be interesting to study for which K\"{o}the
spaces $E$ the above equality also holds.

\section{Banach spaces with a dense increasing family of one-complemented
subspaces}\label{sec:union}

Our goal in this section is to show that the numerical index of
a Banach space which contains a dense increasing union of
one-complemented subspaces is greater or equal than the limit
superior of the numerical indices of those subspaces, and to
provide some consequences of this fact. We need some notation.
Recall that a \emph{directed set} (or \emph{filtered set}) is a
set $I$ endowed with an partial order $\leq$ such that for
every $i,j\in I$, there is $k\in I$ such that $i\leq k$ and
$j\leq k$.

\begin{theorem}\label{theorem:directedset}
Let $Z$ be a Banach space, let $I$ be a directed set, and let
$\{Z_i\,:\,i\in I\}$ be an increasing family of one-complemented
closed subspaces such that $Z=\overline{\bigcup\limits_{i \in I}
Z_i}$. Then,
$$
n(Z)\geq \limsup_{i\in I} n(Z_i).
$$
\end{theorem}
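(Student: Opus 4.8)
The plan is to reduce the numerical radius of an operator on $Z$ to numerical radii of operators on the subspaces $Z_i$, exploiting the one-complementation together with the density of $\bigcup_i Z_i$. Fix $T\in L(Z)$ with $\|T\|=1$ and $\eps>0$. For each $i\in I$ let $Q_i:Z\to Z$ be a norm-one projection with range $Z_i$, so $Q_i^2=Q_i$, $\|Q_i\|=1$ and $Q_i|_{Z_i}=\Id_{Z_i}$. To each index $i$ I would associate the operator $S_i:=Q_i T|_{Z_i}\in L(Z_i)$, which has $\|S_i\|\le 1$, and the point is to show that $v(S_i)\le v(T)$ for every $i$, and that $\|S_i\|\to 1$ along the directed set (more precisely, $\limsup_i\|S_i\|=1$, or at least that for a suitable cofinal subfamily $\|S_i\|>1-\eps$). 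Granting both, for those $i$ with $\|S_i\|>1-\eps$ we get $v(T)\ge v(S_i)\ge n(Z_i)\|S_i\|>n(Z_i)(1-\eps)$; taking $\limsup$ over $i$ and then $\eps\downarrow0$ yields $v(T)\ge (1-\eps)\limsup_i n(Z_i)$ and hence $v(T)\ge\|T\|\limsup_i n(Z_i)$, which gives $n(Z)\ge\limsup_i n(Z_i)$.

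For the inequality $v(S_i)\le v(T)$: given $(z,z^*)\in\Pi(Z_i)$, since $z\in Z_i$ we have $Q_i z=z$, and $z^*\circ Q_i\in S_{Z^*}$ extends $z^*$ with $(z^*\circ Q_i)(z)=1$, so $(z,z^*\circ Q_i)\in\Pi(Z)$; then $|z^*(S_i z)|=|z^*(Q_i T z)|=|(z^*\circ Q_i)(Tz)|\le v(T)$, and taking the supremum over $\Pi(Z_i)$ gives $v(S_i)\le v(T)$. For the norm estimate I would use the density: pick $z_0\in S_Z$ with $\|Tz_0\|>1-\eps/3$; by density choose $j\in I$ and $w\in Z_j$ with $\|w-z_0\|$ small, so that $\|w\|\approx 1$ and $\|Tw\|>1-\eps/2$; then pick $v^*\in S_{Z^*}$ with $v^*(Tw)>1-\eps/2$, and again using density choose $k\in I$ with $k\ge j$ and an approximation of the functional's action, or more directly note $\|S_k\|\ge\|Q_k T w\|/\|w\|$ where I need $Q_k(Tw)\approx Tw$: this requires choosing $k$ large enough that $\|Q_k(Tw)-Tw\|$ is small, which is possible because $Tw\in Z=\overline{\bigcup_i Z_i}$ and $\|Q_i\xi-\xi\|\to 0$ along the directed set for each fixed $\xi\in Z$ (indeed $Q_i$ fixes $Z_i$, so for $\xi$ close to some $Z_{i_0}$ and $i\ge i_0$, $\|Q_i\xi-\xi\|\le 2\,\mathrm{dist}(\xi,Z_{i_0})$). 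Choosing $k\ge j$ handles $w\in Z_j\subseteq Z_k$ so that $w$ is a legitimate test vector for $S_k$, and then $\|S_k\|\ge\|S_k(w/\|w\|)\|=\|Q_k(Tw)\|/\|w\|>1-\eps$.

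The main obstacle I anticipate is the bookkeeping with the directed set and the two successive density approximations: one must pass from an approximate norming vector $z_0$ for $T$ to a vector $w$ actually lying in some $Z_j$, and simultaneously ensure that the chosen index $k$ is both $\ge j$ (so $w\in Z_k$) and large enough that $Q_k$ nearly fixes $Tw$; since $I$ is directed and $\{Z_i\}$ is increasing, such a $k$ exists. A clean way to organize this is the observation $\|Q_i\xi-\xi\|\to0$ for each $\xi\in Z$, proved via the $\eps/3$-argument using $\|Q_i\|=1$ and $Q_i|_{Z_i}=\Id$; once this lemma is in place, everything else is routine. One should be slightly careful that $\limsup_{i\in I}$ over a directed set means $\inf_{i}\sup_{j\ge i}$, so the final step reads: for every $\eps>0$ and every $i_0$ there is $k\ge i_0$ with $v(T)\ge n(Z_k)(1-\eps)$, hence $v(T)\ge(1-\eps)\sup_{k\ge i_0}n(Z_k)$ for all $i_0$, hence $v(T)\ge(1-\eps)\limsup_i n(Z_i)$, and finally $v(T)\ge\limsup_i n(Z_i)$ after letting $\eps\downarrow0$; since $\|T\|=1$ was arbitrary we conclude $n(Z)\ge\limsup_i n(Z_i)$.
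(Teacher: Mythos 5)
Your proposal is correct and follows essentially the same route as the paper's proof: the operators $S_i=Q_iT|_{Z_i}$ are exactly the paper's $S_i=P_iTJ_i$, the inequality $v(S_i)\leq v(T)$ is obtained by the same device of pairing $(z,z^*\circ Q_i)\in\Pi(Z)$, and the norm estimate $\|S_i\|\to 1$ is the same density/approximation argument (your lemma that $\|Q_i\xi-\xi\|\to 0$ for each fixed $\xi$ is the clean way to package what the paper does with the two approximations $\|x-J_i(y)\|<\delta/4$ and $\|Tx-J_i(z)\|<\delta/4$). The only blemish is the final sentence ``there is $k\geq i_0$ with $v(T)\geq n(Z_k)(1-\eps)$, hence $v(T)\geq(1-\eps)\sup_{k\geq i_0}n(Z_k)$'', which is a non sequitur as written --- but your own argument in fact yields $\|S_i\|>1-\eps$ for \emph{all} $i$ beyond some index (a tail, not merely a cofinal set), from which $v(T)\geq(1-\eps)\sup_{i\geq i_1}n(Z_i)\geq(1-\eps)\limsup_i n(Z_i)$ follows correctly, so this is a cosmetic slip rather than a gap.
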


\begin{proof}
Let us denote by $P_i:Z\longrightarrow Z_i$ the norm-one
projection from $Z$ onto $Z_i$ and by $J_i:Z_i\longrightarrow Z$
the natural inclusion. Then,
$$
\|P_i\|=\|J_i\|=1, \quad P_i\circ J_i=\Id_{Z_i} \qquad (i\in I).
$$
We fix $\varepsilon>0$ and take an operator $T\in L(Z)$ such that
$$
\|T\|=1 \qquad \text{and} \qquad v(T)\leq (1+\varepsilon)\,n(Z).
$$
For each $i\in I$, consider the operator $S_i=P_i\, T\, J_i \in
L(Z_i)$.

\noindent\textit{Claim.}\ $v(S_i)\leq v(T)$.\newline Indeed, for
$(y,y^*)\in \Pi(Z_i)$, one has
\begin{align*}
|\langle y^*, S_i(y)\rangle|& = |\langle y^*, P_i(T(J_i(y))\rangle|  \\
& = |\langle P_i^*(y^*), T(J_i(y))\rangle | \leq v(T),
\end{align*}
where the last inequality follows from the fact that
$\|P_i^*(y^*)\|\leq 1$, $\|J_i(y)\|\leq 1$ and
$$
\langle P_i^*(y^*), J_i(y)\rangle =\langle y^*,
[P_i\circ J_i](y)\rangle = \langle y^*,y\rangle =1.
$$
\noindent\textit{Claim.}\ $\lim_{i\in I} \|S_i\|=1$.\newline
Indeed, on the one hand,
$$
\|S_i\|=\|P_i T J_i\|\leq \|P_i\|\|T\|\|J_i\|=1 \quad (i\in I).
$$
On the other hand, for $\delta>0$ fixed, we take $x\in S_Z$ such
that $\|Tx\|>1-\delta/4$. Since the family $\{Z_i\,:\,i\in I\}$ is
increasing and its union is dense, we may find $i_0\in I$ such
that for $i\geq i_0$ there are $y,z\in Z_i$ such that
\begin{equation*}
\|Tx-J_i(z)\|<\delta/4 \qquad \text{and} \qquad \|x-J_i(y)\|<\delta/4.
\end{equation*}
Then, we have that
$$
\|z\|=\|J_i(z)\|\geq \|Tx\|-\|J_i(z)-Tx\|>1-\delta/4 - \delta/4=1-\delta/2
$$
and
\begin{align*}
\|S_i(y)\| &= \|[P_i T J_i](y)\| \\
& = \bigl\|P_i(J_i(z)) - \bigl[P_i(J_i(z)) - P_i(Tx)\bigr] -
\bigl[P_i(Tx)-P_i(T(J_i(y)))\bigr]\bigr\| \\
& \geq \|z\|-\|P_i\|\|J_i(z)-Tx\| - \|P_i\|\|T\|\|x-J_i(y)\| \\
& > 1- \delta/2 -\delta/4 - \delta/4=1-\delta.
\end{align*}
Since $\|y\|=\|J_i(y)\|\leq 1+\delta/4$, it follows that
$$
\|S_i\|\geq \dfrac{1-\delta}{1+\delta/4}.
$$
This gives $\displaystyle \lim_{i\in I} \|S_i\|=1$, as claimed.

To finish the proof, we just observe that for every $i\in I$, we
have that
$$
(1+\varepsilon)n(Z)\geq v(T)\geq v(S_i) \geq n(Z_i)\,\|S_i\|
$$
and, therefore,
$$
(1+\varepsilon) n(Z)\geq \limsup_{i\in I} \bigl[n(Z_i)\,\|S_i\|\bigr]
= \limsup_{i\in I} n(Z_i)\,\lim_{i\in I} \|S_i\|= \limsup_{i\in I} n(Z_i).
$$
The result follows by just taking $\eps\downarrow 0$.
\end{proof}

The easiest particular case of the above result is to Banach
spaces with a monotone basis (i.e.\ a basis whose basic
constant is $1$).

\begin{corollary}\label{cor:monotonic-basis}
Let $Z$ be a Banach space with a monotone basis $(e_m)$ and for
each $m\in \N$, let $X_m=\linspan\{e_k\, : \, 1\leq k \leq
m\}$. Then
$$
n(X) \geq \limsup_{m\to \infty} n(X_m).
$$
\end{corollary}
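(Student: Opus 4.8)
The plan is to apply Theorem~\ref{theorem:directedset} directly, taking the directed set $I=\N$ with its usual order and the increasing family $Z_m=X_m$. Three things need to be verified: that each $X_m$ is one-complemented, that the family $\{X_m\}$ is increasing, and that $Z=\overline{\bigcup_{m}X_m}$.

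First I would recall the standard description of the basic constant of $(e_m)$ as the supremum of the norms of the canonical partial-sum projections $P_m\colon Z\longrightarrow X_m$, $P_m\bigl(\sum_k a_k e_k\bigr)=\sum_{k\leq m}a_k e_k$. The hypothesis that the basis is monotone means precisely that $\|P_m\|=1$ for every $m$, so $P_m$ is a norm-one projection of $Z$ onto $X_m$ and hence $X_m$ is one-complemented; being finite-dimensional, it is also closed. The inclusion $X_m\subseteq X_{m+1}$ is immediate from the definition of $X_m=\linspan\{e_k\,:\,1\leq k\leq m\}$, so the family is increasing. Finally, since $(e_m)$ is a Schauder basis of $Z$, every $x\in Z$ is the norm-limit of its partial sums $P_m x\in X_m$, so $\bigcup_m X_m$ is dense in $Z$.

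With these observations all the hypotheses of Theorem~\ref{theorem:directedset} are met, and that theorem yields $n(Z)\geq\limsup_{m\to\infty}n(X_m)$, which is the asserted inequality. There is no genuine obstacle here; the only point that deserves an explicit line is the identification of monotonicity of the basis with the norm-one property of its partial-sum projections, together with the remark that these projections witness the one-complementedness of the $X_m$.
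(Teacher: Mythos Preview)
Your proposal is correct and is exactly the argument the paper intends: the corollary is stated as ``the easiest particular case'' of Theorem~\ref{theorem:directedset}, and your verification that the partial-sum projections witness one-complementedness (by monotonicity), that the $X_m$ increase, and that their union is dense is precisely what is needed.
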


It is known, see Section~\ref{sec:Lp}, that if $X=\ell_p$ then
the inequality above is actually an equality, but we do not
know whether the same is true in any other type of spaces.

\begin{problem}
Let $Z$ be a Banach space with a monotone (or even
one-unconditional, one-symmetric) basis $\{e_m\}_{m\in\N}$, and
let $X_m=\linspan\{e_k\, : \, 1\leq k \leq m\}$. Is it true
that $n(X) = \limsup\limits_{m\to \infty} n(X_m)$?
\end{problem}

Next examples show that the inequality in
Theorem~\ref{theorem:directedset} may be strict.

\begin{examples}$ $
\begin{itemize}
\item[(a)] \textsc{Real case:\ } Let $X$ be the
    three-dimensional real space given in
    Example~\ref{example:symmetric-1} such that
    $$
    n(X)>n\bigl(P_2(X)\bigr)=0,
    $$
    where $P_2(X)$ is the subspace of $X$ spanned by the
    two first coordinates. Now, consider the space
    $Z=\ell_1(X)$ and for each $m\in \N$, we consider the
    subspace
    \begin{align*}
    Z_m  & =\bigl\{x=(x_k)\in Z\ : \ x_{m+1}(3)=0,\ x_k=0
     \ \forall k\geq m+2 \bigr\} \\
    & \equiv X\oplus_1 \overset{m}{\cdots} \oplus_1 X \oplus_1 P_2(X).
    \end{align*}
    Then, we have $n(Z)=n(X)>0$ and
    $n(Z_m)=n\bigl(P_2(X)\bigr)=0$ by
    Corollary~\ref{cor:equality-l1-linfty}. Observe that
    each $Z_m$ is one-complemented in $Z$, the sequence
    $\{Z_m\}$ is increasing and
    $Z=\overline{\bigcup_{m\in\N} Z_m}$. But $n(Z)>
    \limsup_{m\to\infty} n(Z_m)$. Let us comment that the
    space $Z$ has a monotone basis and that the subspaces
    $Z_m$ are actually the range of some of the projections
    associated to the basis.
\item[(b)] \textsc{Complex case:\ } If we use the
    five-dimensional complex space $X$ of
    Example~\ref{example:complex-unconditional}, we can repeat
    the proof above to get the same kind of example in the
    complex case.
\end{itemize}
\end{examples}

Let us present here applications of
Theorem~\ref{theorem:directedset}. The first one allows to
calculate the numerical index of $L_1(\mu,X)$. This result
appeared in \cite[Theorem~8]{M-P}.

\begin{corollary}\label{cor:L_1muX-equality}
Let $(\Omega,\Sigma,\mu)$ be a complete positive measure space and
let $X$ be a Banach space. Then $n\bigl(L_1(\mu,X)\bigr)=n(X)$.
\end{corollary}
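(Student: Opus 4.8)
The plan is to prove the two inequalities $n\bigl(L_1(\mu,X)\bigr)\leq n(X)$ and $n\bigl(L_1(\mu,X)\bigr)\geq n(X)$ separately. The first one is already available: $L_1(\mu)$ is order continuous when $\mu$ is $\sigma$-finite, so Corollary~\ref{cor:Lp-vector} (with $p=1$) gives $n\bigl(L_1(\mu,X)\bigr)\leq n(X)$; for a general positive measure one reduces to the $\sigma$-finite case by decomposing $\Omega$ into parts of finite measure and expressing $L_1(\mu,X)$ as an $\ell_1$-sum of spaces $L_1(\mu_j,X)$ with $\mu_j$ finite, then invoking Corollary~\ref{cor:c0-lp-sum-easyinequality}. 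So the real content is the reversed inequality $n\bigl(L_1(\mu,X)\bigr)\geq n(X)$.

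For the reversed inequality the idea is to realize $L_1(\mu,X)$ as a dense increasing union of one-complemented subspaces, each of which is an $\ell_1$-sum of copies of $X$, and then apply Theorem~\ref{theorem:directedset} together with Corollary~\ref{cor:equality-l1-linfty}. Concretely, first reduce to the case where $\mu$ is $\sigma$-finite (again by writing $L_1(\mu,X)$ as an $\ell_1$-sum over the pieces of finite measure and noting that an $\ell_1$-sum of spaces each with numerical index $\geq n(X)$ has numerical index $\geq n(X)$ by Corollary~\ref{cor:equality-l1-linfty}), and then, since a $\sigma$-finite measure is mutually absolutely continuous with a finite one giving an isometric $L_1$, assume $\mu$ is a finite (or probability) measure. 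Now let $I$ be the set of all finite measurable partitions of $\Omega$ into sets of positive measure, directed by refinement; for $\pi=\{A_1,\dots,A_m\}\in I$ let $Z_\pi\subseteq L_1(\mu,X)$ be the subspace of functions that are constant (as $X$-valued functions) on each atom $A_j$ of $\pi$. Then $Z_\pi$ is isometric to $\bigl[X\oplus\cdots\oplus X\bigr]_{\ell_1^m}$ (with the weights $\mu(A_j)$, which still gives an $\ell_1$-type absolute sum), the conditional expectation $f\mapsto \mathbb{E}[f\mid\pi]$ is a norm-one projection of $L_1(\mu,X)$ onto $Z_\pi$, the family $\{Z_\pi\}$ is increasing under refinement, and $\bigcup_\pi Z_\pi$ is dense in $L_1(\mu,X)$ because $X$-valued simple functions are dense. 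By Corollary~\ref{cor:equality-l1-linfty} each $n(Z_\pi)=n(X)$, so Theorem~\ref{theorem:directedset} yields $n\bigl(L_1(\mu,X)\bigr)\geq \limsup_\pi n(Z_\pi)=n(X)$.

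The main obstacle, and the point requiring care, is the bookkeeping in the reductions to the $\sigma$-finite and then finite case: one must check that when $L_1(\mu,X)$ is written as an $\ell_1$-sum $\bigl[\bigoplus_j L_1(\mu_j,X)\bigr]_{\ell_1}$, the lower bound $n\geq n(X)$ for each summand propagates to the sum — which is exactly the equality direction in Corollary~\ref{cor:equality-l1-linfty} — and that replacing a $\sigma$-finite $\mu$ by an equivalent finite measure does not change $L_1(\mu,X)$ isometrically (this is the standard Radon--Nikod\'ym density argument). A secondary routine point is verifying that conditional expectation with respect to a finite partition is a contractive projection onto $Z_\pi$ in the $L_1(\mu,X)$ norm, which follows from Jensen's inequality applied to the convex function $\|\cdot\|_X$, and that $Z_\pi$ with the inherited norm is genuinely the $\ell_1$-sum of $m$ copies of $X$ (after absorbing the constants $\mu(A_j)$), so that Corollary~\ref{cor:equality-l1-linfty} applies verbatim. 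Once these identifications are in place the two inequalities combine to give $n\bigl(L_1(\mu,X)\bigr)=n(X)$.
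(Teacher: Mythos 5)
Your proof is correct and is essentially the paper's own argument: the upper bound via the $\ell_1$-decomposition into finite-measure pieces together with Corollary~\ref{cor:Lp-vector}, and the lower bound via conditional-expectation projections onto subspaces of simple functions indexed by partitions directed by refinement, combined with Theorem~\ref{theorem:directedset} and Corollary~\ref{cor:equality-l1-linfty}. The only (harmless) difference is organizational: the paper runs the directed-set argument directly in the general measure space, taking $I$ to be the finite collections of pairwise disjoint sets of \emph{finite} measure and $Z_\pi$ the simple functions supported on the elements of $\pi$, whereas you first reduce to a finite measure and then use full partitions of $\Omega$.
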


\begin{proof} Set $Z=L_1(\mu,X)$. We write $I$ for the
family of all finite collections of pairwise disjoint elements
of $\Sigma$ with finite measure, ordered by $\pi_1 \leq \pi_2$
if and only if each element in $\pi_1$ is a union of elements
in $\pi_2$. Then $I$ is a directed set. For $\pi\in I$, we
write $Z_\pi$ for the subspace of $Z$ consisting of all simple
functions supported in the elements of $\pi$. Now, for every
$\pi\in I$, the subspace $Z_\pi$ is isometrically isomorphic to
$\ell_1^m(X)$ ($m$ is the number of elements in $\pi$, see
\cite[Lemma~II.2.1]{D-U} for instance), it is one-complemented
by the conditional expectation associated to the partition
$\pi$ and, finally, the density of simple functions on
$L_1(\mu,X)$ gives that $Z= \overline{\bigcup_{\pi\in I}
Z_\pi}$. Then, Theorem~\ref{theorem:directedset} applies and it
follows that
\begin{equation*}
n\bigl(L_1(\mu,X)\bigr)\geq \limsup_{\pi\in I} n(Z_\pi)=\limsup_{\pi\in I} n\bigl(\ell_1^m(X)\bigr)
=n(X),
\end{equation*}
where the last equality above follows from
Corollary~\ref{cor:equality-l1-linfty}. To get the reversed
inequality, we start by using that there is a family of finite
measure spaces $\{\nu_j\,:\, j\in J\}$ such that
$$
L_1(\mu,X)\equiv \Bigl[\bigoplus_{j\in J} L_1(\nu_j,X)\Bigr]_{\ell_1}
$$
(for $X=\K$ there is a proof of this fact in \cite[p.~501]{D-F}
which obviously extends to any arbitrary space $X$). Then pick any
$j\in J$ and use Corollary~\ref{cor:c0-lp-sum-easyinequality} to
get that $n\bigl(L_1(\mu,X)\bigr)\leq n\bigl(L_1(\nu_j,X)\bigr)$.
Since $\nu_j$ is finite, we may now apply
Corollary~\ref{cor:Lp-vector} to get $n\bigl(L_1(\nu_j,X)\bigr)
\leq n(X)$.
\end{proof}

In Section~\ref{sec:Lp} we will use arguments similar to the
ones in the above proof (and some more) to get a result on
$n\bigl(L_p(\mu,X)\bigr)$ for $1<p<\infty$.

Next application of Theorem~\ref{theorem:directedset} is to
absolute sums of Banach spaces. As a particular case, we will
calculate the numerical index of a $c_0$-sum of Banach spaces. We
need some notation. Given a nonempty set $\Lambda$, let
$\mathcal{I}_\Lambda$ denote the collection of all finite subsets
of $\Lambda$ ordered by inclusion. Given a linear subspace $E$ of
$\R^\Lambda$ for every $A\in \mathcal{I}_\Lambda$ we consider
$E_A$ to be the subspace of $E$ consisting of those elements of
$E$ supported in $A$ and observe that $E_A$ is a linear subspace
of $\R^A$ with an absolute norm (the restriction of the one in
$E$).

\begin{corollary}\label{cor:equality-E=cupE_A}
Let $\Lambda$ be a nonempty set and let $E$ be a linear subspace
of $\R^\Lambda$ with an absolute norm. Suppose that there is a
subset $I$ of $\mathcal{I}_\Lambda$, which is still directed by
the inclusion, satisfying that $\bigcup_{A\in I} E_A$ is dense in
$E$ and that $n(E_A)=1$ for every $A\in I$. Then, given an
arbitrary family $\{X_\lambda\, : \, \lambda\in\Lambda\}$ of
Banach spaces,
$$
n\left(\Bigl[\bigoplus_{\lambda\in \Lambda} X_\lambda\Bigr]_{E}\right) \,
= \, \inf\bigl\{n(X_\lambda)\,:\,\lambda\in \Lambda\bigr\}.
$$
\end{corollary}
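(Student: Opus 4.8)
The plan is to establish the two inequalities separately: $\leq$ via Theorem~\ref{thm:EMA-infinito} and $\geq$ via Theorem~\ref{theorem:directedset} together with Corollary~\ref{cor:abs-sums-equality-basis}(a). Throughout write $X=\bigl[\bigoplus_{\lambda\in\Lambda}X_\lambda\bigr]_E$ and $E_0=\bigcup_{A\in I}E_A$; since $I$ is directed by inclusion, $E_0$ is a linear subspace of $E$, and it is dense by hypothesis.

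For $n(X)\leq\inf_{\lambda}n(X_\lambda)$ I would verify the hypothesis of Theorem~\ref{thm:EMA-infinito} with the dense set $\mathcal{A}=S_E\cap E_0=\bigcup_{A\in I}S_{E_A}$, which is dense in $S_E$ by the usual normalization argument applied to the dense subspace $E_0$ (one uses here that the norm of $E_A$ is the restriction of that of $E$). Given $(a_\lambda)\in S_{E_A}$, Hahn--Banach yields a supporting functional on $E_A$; as $E_A=\R^A$, it is given by some $(b_\lambda)_{\lambda\in A}$, and since the norm of $E_A$ is absolute one has $\|(b_\lambda)\|_{(E_A)'}=1$ and $\sum_{\lambda\in A}b_\lambda a_\lambda=1$. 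Extending $(b_\lambda)$ by $0$ to $\Lambda$ and using property~(c) (the restriction to $A$ of an element of $B_E$ lies in $B_{E_A}$) one checks $\|(b_\lambda)\|_{E'}=1$, while still $\sum_{\lambda\in\Lambda}b_\lambda a_\lambda=1$; then Theorem~\ref{thm:EMA-infinito} gives the inequality. (Alternatively, density of the finitely supported functions forces $E$ to be order continuous, so Corollary~\ref{cor:leq-Eprima-F} applies directly.)

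For $n(X)\geq\inf_\lambda n(X_\lambda)$, for each $A\in I$ let $X_A\subseteq X$ consist of the elements supported in $A$. Then $X_A$ is isometric to $\bigl[\bigoplus_{\lambda\in A}X_\lambda\bigr]_{E_A}$; it is the range of the norm-one projection on $X$ that annihilates the coordinates outside $A$ (property~(c) again); the family $\{X_A:A\in I\}$ is increasing; and $\overline{\bigcup_{A\in I}X_A}=X$, because given $x=(x_\lambda)\in X$ one approximates $(\|x_\lambda\|)_\lambda$ in $E$ by an element of $E_0$ supported in some $A$ and then replaces $x$ by its truncation to $A$, the error being controlled by property~(c). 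Since $n(E_A)=1$, Corollary~\ref{cor:abs-sums-equality-basis}(a) gives $n(X_A)=\min_{\lambda\in A}n(X_\lambda)$, so Theorem~\ref{theorem:directedset} yields $n(X)\geq\limsup_{A\in I}n(X_A)$. The net $A\mapsto\min_{\lambda\in A}n(X_\lambda)$ is nonincreasing along the directed set $I$, hence converges to $\inf_{A\in I}\min_{\lambda\in A}n(X_\lambda)=\inf\{n(X_\lambda):\lambda\in\bigcup_{A\in I}A\}$.

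Finally I would record that $\bigcup_{A\in I}A=\Lambda$: if some $\kappa\in\Lambda$ lay outside every $A\in I$, then $y(\kappa)=0$ for each $y\in E_0$, so $|\chi_{\{\kappa\}}-y|\geq|\chi_{\{\kappa\}}|$ pointwise and $\|\chi_{\{\kappa\}}-y\|_E\geq\|\chi_{\{\kappa\}}\|_E=1$, contradicting the density of $E_0$ in $E$ (note $\chi_{\{\kappa\}}\in S_E$ by property~(b)). Hence the infimum above equals $\inf\{n(X_\lambda):\lambda\in\Lambda\}$, and combining with the first inequality completes the proof. The points needing genuine attention are the extension of the norming functionals from $E_A$ to $E'$ and the bookkeeping that matches the limit superior over $I$ with the infimum over all of $\Lambda$; the rest is routine lattice manipulation.
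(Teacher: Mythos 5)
Your proof is correct and follows essentially the same route as the paper's: the upper bound via Theorem~\ref{thm:EMA-infinito} with the dense set $\bigcup_{A\in I}S_{E_A}$ and zero-extension of the norming functionals, and the lower bound via Theorem~\ref{theorem:directedset} applied to the one-complemented subspaces supported on the sets of $I$ together with Corollary~\ref{cor:abs-sums-equality-basis}(a). The only (harmless) difference is that your final bookkeeping, including the verification that $\bigcup_{A\in I}A=\Lambda$, is unnecessary: since each $n(X_A)=\min_{\lambda\in A}n(X_\lambda)\geq\inf\{n(X_\lambda):\lambda\in\Lambda\}$, the limit superior is at least this infimum directly, which is how the paper concludes.
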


\begin{proof}
We write $Z=\Bigl[\bigoplus_{\lambda\in \Lambda}
X_\lambda\Bigr]_{E}$ and for each
$A=\{\lambda_1,\ldots,\lambda_m\}\in I$ we write $Z_A$ for the
$E$-sum of the family $\{Y_\lambda\}$ where $Y_\lambda=X_\lambda$
for $\lambda\in\{\lambda_1,\ldots,\lambda_m\}$ and
$Y_\lambda=\{0\}$ otherwise (we will use the agreement that
$n(\{0\})=1$). We observe that each $Z_A$ is one-complemented in
$Z$ and that $Z=\overline{\bigcup_{A\in I} Z_A}$. Therefore,
Theorem~\ref{theorem:directedset} applies and provides that
$$
n(Z)\geq \limsup_{A\in I} n(Z_A).
$$
Now, we take $A=\{\lambda_1,\ldots,\lambda_m\}\in I$, observe that
$E_A$ is isometrically isomorphic to $\R^m$ endowed with an
absolute norm which we denote by $\widetilde{E_A}$, and also
observe that $Z_A$ is isometrically isomorphic to the
$\widetilde{E_A}$-sum of $X_{\lambda_1},\ldots,X_{\lambda_m}$. As
$n(\widetilde{E_A})=n(E_A)=1$ by hypothesis, we may use
Corollary~\ref{cor:abs-sums-equality-basis}.a to get that
$$
n(Z_A)=n\left(\bigl[X_{\lambda_1}\oplus \cdots \oplus X_{\lambda_m}\bigr]_{\widetilde{E_A}} \right)
= \min\bigl\{n(X_{\lambda_1}),\ldots,n(X_{\lambda_m})\bigr\} \geq
\inf\bigl\{n(X_\lambda)\,:\,\lambda\in \Lambda\bigr\}.
$$
To get the reversed inequality, we will use
Theorem~\ref{thm:EMA-infinito}. For this, we write
$\mathcal{A}=\bigcup_{A\in I} S_{E_A}$ which is dense in $S_E$
since $\bigcup_{A\in I} E_A$ is dense in $E$. For every
$A=\{\lambda_1,\ldots,\lambda_m\}\in I$ and every
$a=(a_\lambda)\in S_{E_A}$, we use that $E_A\equiv
\widetilde{E_A}$ to write
$\widetilde{a}=(a_{\lambda_1},\ldots,a_{\lambda_m})\in
S_{\widetilde{E_A}}$ and consider
$\widetilde{b}=(\beta_1,\ldots,\beta_m)\in S_{\widetilde{E_A}^*}$
such that $\sum_{j=1}^m \beta_j a_{\lambda_j} =1$. Now, we define
$(b_\lambda)\in E'$ by the formula
$$
b_\lambda=\beta_j \ \ \text{if $\lambda=\lambda_j$ for $j=1,\ldots,m$} \quad \text{and} \quad
b_\lambda=0\ \ \text{otherwise}.
$$
Then, $(b_\lambda)\in S_{E'}$ and
$$
\sum_{\lambda\in\Lambda} b_\lambda a_\lambda = \sum_{j=1}^m \beta_j a_{\lambda_j} = 1.
$$
This shows that we may use Theorem~\ref{thm:EMA-infinito} to get
\begin{equation*}
n\left(\Bigl[\bigoplus_{\lambda\in \Lambda} X_\lambda\Bigr]_{E}\right) \,
\leq \, \inf\bigl\{n(X_\lambda)\,:\,\lambda\in \Lambda\bigr\}.\qedhere
\end{equation*}
\end{proof}

The hypotheses of the above corollary are clearly satisfied by
$E=c_0(\Lambda)$ and $E=\ell_1(\Lambda)$. For the second space,
the thesis already appeared in our
Corollary~\ref{cor:equality-l1-linfty}. We state the result for
$E=c_0(\Lambda)$. It already appeared in
\cite[Proposition~1]{M-P}.

\begin{corollary}
Let $\Lambda$ be a nonempty set and let $\{X_\lambda\, : \,
\lambda\in\Lambda\}$ be an arbitrary family of Banach spaces. Then
$$
n\left(\Bigl[\bigoplus_{\lambda\in \Lambda}
X_\lambda\Bigr]_{c_0}\right) =
\inf\bigl\{n(X_\lambda)\,:\,\lambda\in \Lambda\bigr\}.
$$
\end{corollary}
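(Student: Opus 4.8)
The plan is to derive the statement directly from Corollary~\ref{cor:equality-E=cupE_A} applied to $E=c_0(\Lambda)$, so that the whole task reduces to checking that this space satisfies the hypotheses of that corollary. Concretely, I would take $I=\mathcal{I}_\Lambda$, the collection of all finite subsets of $\Lambda$, which is obviously directed by inclusion.

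Next I would identify the subspaces $E_A$. For a finite set $A\subseteq \Lambda$, the subspace $E_A$ consists of the elements of $c_0(\Lambda)$ supported on $A$, endowed with the restriction of the supremum norm; hence $E_A$ is isometrically isomorphic to $\ell_\infty^{|A|}$. Since $\ell_\infty^m$ is nothing but $C(K)$ for $K$ a finite (discrete) set, the fact recalled in the introduction that $n\bigl(C(K)\bigr)=1$ for every compact $K$ gives $n(E_A)=1$ for every $A\in I$.

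Then I would check the density requirement: $\bigcup_{A\in\mathcal{I}_\Lambda} E_A$ is precisely the set of finitely supported elements of $c_0(\Lambda)$, and this set is dense in $c_0(\Lambda)$ by the very definition of the latter (given $x\in c_0(\Lambda)$ and $\eps>0$, only finitely many coordinates of $x$ have modulus $\geq\eps$, and truncating the remaining ones to $0$ produces a finitely supported element within $\eps$ of $x$). With all hypotheses of Corollary~\ref{cor:equality-E=cupE_A} verified, the conclusion follows at once.

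There is essentially no real obstacle here; the only points needing a word of justification are the isometric identification $E_A\cong\ell_\infty^{|A|}$ and the value $n(\ell_\infty^m)=1$, both of which are standard. One could alternatively bypass Corollary~\ref{cor:equality-E=cupE_A} and argue directly: the inequality ``$\leq$'' is Corollary~\ref{cor:c0-lp-sum-easyinequality}, while for ``$\geq$'' one writes the $c_0$-sum as the closure of the increasing union of the one-complemented subspaces $Z_A=\bigl[\bigoplus_{\lambda\in A} X_\lambda\bigr]_{\ell_\infty}$ (extended by zeros), notes $n(Z_A)=\min_{\lambda\in A} n(X_\lambda)$ by Corollary~\ref{cor:equality-l1-linfty}, and applies Theorem~\ref{theorem:directedset}; but routing everything through Corollary~\ref{cor:equality-E=cupE_A} is the cleanest option.
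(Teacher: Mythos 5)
Your proposal is correct and follows exactly the paper's route: the paper derives this corollary by observing that $E=c_0(\Lambda)$ satisfies the hypotheses of Corollary~\ref{cor:equality-E=cupE_A}, which is precisely what you verify (with the identifications $E_A\cong\ell_\infty^{|A|}$, $n(\ell_\infty^{|A|})=1$, and the density of finitely supported elements). Your spelled-out verification of the hypotheses, which the paper leaves as ``clearly satisfied,'' is accurate.
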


Another particular case of Corollary~\ref{cor:equality-E=cupE_A}
appears when we deal with Banach spaces with one-unconditional
basis.

\begin{corollary}
let $E$ be a Banach space with a one-unconditional basis, let
$(j_m)$ be an increasing sequence of positive integers and let
$E_m$ be the subspace of $E$ spanned by the $j_m$ first
coordinates. Suppose $n(E_m)=1$ for every $m\in \N$. Then, for
every sequence $\{X_j\,:\,j\in\N\}$ of Banach spaces,
$$
n\left(\Bigl[\bigoplus_{j\in\N}
X_\lambda\Bigr]_{E}\right)= \inf\{n(X_j)\, :\, j\in \N\}.
$$
\end{corollary}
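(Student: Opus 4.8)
The plan is to deduce this directly from Corollary~\ref{cor:equality-E=cupE_A} applied with $\Lambda=\N$. First I would recall that a Banach space $E$ with a one-unconditional (normalized) basis $(e_n)$ can be regarded, via the coordinate functionals of the basis, as a linear subspace of $\R^\N$ whose norm is absolute: conditions (a) and (b) of Section~\ref{sec:absolute-sums} are exactly the statement that $(e_n)$ is a one-unconditional normalized basis. Under this identification, for a finite set $A\subseteq\N$ the subspace $E_A$ of Section~\ref{sec:union} (the elements of $E$ supported on $A$) is precisely $\linspan\{e_k : k\in A\}$, since this span is finite-dimensional, hence closed, and it contains every element of $E$ supported on $A$.

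Next I would choose the directed set. Put $A_m=\{1,2,\dots,j_m\}$ for $m\in\N$ and $I=\{A_m : m\in\N\}\subseteq\mathcal{I}_\N$. Since $(j_m)$ is increasing we have $A_1\subseteq A_2\subseteq\cdots$, so $I$ is totally ordered, in particular directed, by inclusion. By the identification above, $E_{A_m}=E_m$, so $n(E_{A_m})=1$ for every $m$ by hypothesis. It remains only to check that $\bigcup_{m\in\N} E_{A_m}=\bigcup_{m\in\N} E_m$ is dense in $E$, and this holds because $(e_n)$ is a Schauder basis: the finitely supported vectors are norm-dense in $E$, and any vector supported on $\{1,\dots,N\}$ lies in $E_m$ as soon as $j_m\geq N$.

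With these verifications in hand, Corollary~\ref{cor:equality-E=cupE_A} applies verbatim to $E$ (with the above choice of $I$) and yields
$$
n\left(\Bigl[\bigoplus_{j\in\N} X_j\Bigr]_{E}\right)=\inf\bigl\{n(X_j)\,:\,j\in\N\bigr\},
$$
which is the assertion. I do not expect any genuine obstacle here; the only points deserving a word of justification are the absoluteness of the norm of $E$ under the basis identification and the density of $\bigcup_m E_m$ in $E$, both of which are standard facts about one-unconditional bases, already used implicitly elsewhere in the paper.
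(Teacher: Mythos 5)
Your proof is correct and is exactly the route the paper intends: the corollary is stated as a particular case of Corollary~\ref{cor:equality-E=cupE_A}, obtained by viewing $E$ (via its one-unconditional, normalized basis) as a subspace of $\R^{\N}$ with absolute norm and taking $I=\{\{1,\dots,j_m\}:m\in\N\}$. The verifications you single out (absoluteness of the norm under the basis identification, $E_{A_m}=E_m$, and density of $\bigcup_m E_m$ via the Schauder basis property) are precisely the ones needed, so there is nothing to add.
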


\section{The numerical index of $L_p$-spaces}\label{sec:Lp}
Our goal in this section is to deduce from the results of the
previous sections that all infinite-dimensional $L_p(\mu)$ spaces
have the same numerical index, independent of the measure $\mu$.
Actually, we will give the result for vector-valued spaces. Let us
say that all the results in this section are already known: they
were proved using particular arguments of the $L_p$ spaces in the
papers \cite{Eddari,Eddari-Khamsi,Eddari-Khamsi-Aksoy} (some of
them with additional unnecessary hypotheses on the measure $\mu$).
In our opinion, the abstract vision we are developing in this
paper allows to understand better the properties of $L_p$-spaces
underlying the proofs: $\ell_p$-sums are absolute sums,
$L_p$-norms are associative, every measure space can be decomposed
into parts of finite measure, every finite measure algebra is
isomorphic to the union of homogeneous measure algebras (Maharam's
theorem) and, finally, the density of simple functions via the
conditional expectation projections.

\begin{prop}\textrm{\cite{Eddari,Eddari-Khamsi,Eddari-Khamsi-Aksoy}}\label{prop:L_p-final}
Let $1<p<\infty$ and let $X$ be any Banach space.
\begin{enumerate}
\item[(a)] The sequence
    $\bigl\{n(\ell_p^m(X))\bigr\}_{m\in\N}$ is decreasing and
$$
n(\ell_p(X)) = \lim_{m\to\infty} n(\ell_p^m(X))\ \Bigl[=\inf_{m\in\N} n(\ell_p^m(X))\Bigr].
$$
\item[(b)] If $\mu$ is a positive measure such that $L_p(\mu)$
    is infinite-dimensional, then
    $$
    n\bigl(L_p(\mu,X)\bigr)=n\bigl(\ell_p(X)\bigr).
    $$
\end{enumerate}
In particular, all infinite-dimensional $L_p(\mu)$ spaces have the
same numerical index.
\end{prop}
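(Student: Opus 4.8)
The plan is to prove parts (a) and (b) separately and then deduce the ``in particular'' clause. For part (a), the monotonicity of $\{n(\ell_p^m(X))\}_{m\in\N}$ follows immediately from Corollary~\ref{cor:One-unconditonalbasis-easy}.a applied to the decomposition $\ell_p^{m+1}(X) = [\ell_p^m(X)\oplus X]_{\ell_p^2}$, which gives $n(\ell_p^{m+1}(X))\leq \min\{n(\ell_p^m(X)),n(X)\}\leq n(\ell_p^m(X))$; hence the infimum equals the limit. For the identity $n(\ell_p(X))=\inf_m n(\ell_p^m(X))$, the inequality $\leq$ comes from Corollary~\ref{cor:c0-lp-sum-easyinequality} since each $\ell_p^m(X)$ embeds one-complementedly in $\ell_p(X)$ as the span of the first $m$ coordinates (or, more directly, $\ell_p(X)=[\ell_p^m(X)\oplus \ell_p(X)]_{\ell_p^2}$ by associativity). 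For the reverse inequality $\geq$, I would apply Theorem~\ref{theorem:directedset} with $Z=\ell_p(X)$, $I=\N$, and $Z_m$ the span of the first $m$ coordinate copies of $X$: this family is increasing, each $Z_m$ is one-complemented (by the obvious coordinate projection, which has norm one), and $\bigcup_m Z_m$ is dense in $\ell_p(X)$. Theorem~\ref{theorem:directedset} then yields $n(\ell_p(X))\geq \limsup_m n(Z_m)=\limsup_m n(\ell_p^m(X))=\inf_m n(\ell_p^m(X))$, completing (a).

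For part (b), the upper bound $n(L_p(\mu,X))\leq n(\ell_p(X))$ should come from exhibiting $\ell_p(X)$ as a one-complemented subspace of $L_p(\mu,X)$: since $L_p(\mu)$ is infinite-dimensional, $\mu$ admits a countable collection of pairwise disjoint measurable sets of positive finite measure, and the corresponding band (simple functions constant on each such set, zero elsewhere) is isometric to $\ell_p(X)$ and one-complemented via a conditional-expectation-type projection; then Corollary~\ref{cor:c0-lp-sum-easyinequality} (writing $L_p(\mu,X)$ as an $\ell_p^2$-sum of this band and its complement) gives the inequality. For the lower bound $n(L_p(\mu,X))\geq n(\ell_p(X))$, I would run the directed-set machinery of Theorem~\ref{theorem:directedset} as in the proof of Corollary~\ref{cor:L_1muX-equality}: decompose $L_p(\mu,X)$ first into an $\ell_p$-sum of pieces over finite measure spaces (so by Corollary~\ref{cor:c0-lp-sum-easyinequality} it suffices to treat $L_p(\nu,X)$ with $\nu$ finite), then use Maharam's theorem to reduce to a countable $\ell_p$-sum of homogeneous pieces $L_p(\{0,1\}^{\Gamma},X)$, and on each homogeneous piece use the conditional expectations onto finite sub-$\sigma$-algebras. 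The ranges of these conditional expectations are one-complemented subspaces isometric to $\ell_p^m(X)$ whose union is dense, so Theorem~\ref{theorem:directedset} gives $n(L_p(\nu,X))\geq \limsup_m n(\ell_p^m(X)) = n(\ell_p(X))$ by part (a); assembling the pieces via Corollary~\ref{cor:c0-lp-sum-easyinequality} gives the same bound for $L_p(\mu,X)$.

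Finally, the ``in particular'' statement is the scalar case $X=\K$: for any $\mu$ with $L_p(\mu)$ infinite-dimensional, part (b) gives $n(L_p(\mu))=n(\ell_p)$, a single constant depending only on $p$.

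The main obstacle is the reduction step inside part (b): stitching together the decomposition of an arbitrary measure space into finite-measure pieces, Maharam's classification of finite measure algebras into homogeneous ones, and the density of simple functions, while keeping track of the one-complementedness of all the intermediate subspaces so that Theorem~\ref{theorem:directedset} and Corollary~\ref{cor:c0-lp-sum-easyinequality} apply cleanly. Each individual ingredient is standard, but organizing them so that the directed family of one-complemented finite-dimensional subspaces is visibly increasing with dense union — uniformly across the homogeneous components — is the delicate bookkeeping. By contrast, once one is on a homogeneous piece $L_p(\{0,1\}^\Gamma,X)$, the conditional expectations onto the $\sigma$-algebras generated by finitely many coordinates provide exactly the required increasing family of norm-one projections with ranges isometric to $\ell_p^{2^n}(X)$ and dense union, so that part of the argument is routine.
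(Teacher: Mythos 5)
Part (a) of your plan is correct and is exactly the paper's argument. The problem is the upper bound in part (b). You propose to exhibit $\ell_p(X)$ as the range of a conditional-expectation projection inside $L_p(\mu,X)$ and then invoke Corollary~\ref{cor:c0-lp-sum-easyinequality} by ``writing $L_p(\mu,X)$ as an $\ell_p^2$-sum of this band and its complement.'' That decomposition does not exist in general: the range of the conditional expectation onto the $\sigma$-algebra generated by the sets $A_n$ is one-complemented, but unless every $A_n$ is an atom the splitting of $L_p(\mu|_{A_n},X)$ into constants and the kernel of the expectation is not an absolute sum (for $v$ constant on $A_n$ and $w$ in the kernel, $\|v+w\|^p\neq\|v\|^p+\|w\|^p$). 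So neither Corollary~\ref{cor:c0-lp-sum-easyinequality} nor Corollary~\ref{cor:EMA} applies, and one-complementedness alone cannot be substituted for absoluteness: Examples~\ref{example:symmetric-1} and \ref{example:complex-unconditional} of this very paper show that the numerical index of a space can strictly exceed that of a one-complemented (even one-unconditional) subspace. This is a genuine gap, and it is precisely the point where the paper has to work hardest: it uses Lemma~\ref{lemma:Maharam} (disjoint decomposition into finite-measure pieces plus Maharam's theorem) to produce a $\sigma$-finite $\tau$ with $L_p(\mu,X)\equiv L_p(\tau,X)\oplus_p Z$ and, crucially, $L_p(\tau,X)\equiv L_p(\tau,\ell_p(X))$ by homogeneity; the upper bound then follows from Corollary~\ref{cor:Lp-vector} applied with $\ell_p(X)$ in the role of the target space, i.e.\ $n\bigl(L_p(\tau,\ell_p(X))\bigr)\leq n\bigl(\ell_p(X)\bigr)$. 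You need $\ell_p(X)$ to appear as the \emph{fiber} of a K\"{o}the--Bochner space, not merely as a one-complemented subspace.

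Your lower bound in (b) also has the logic reversed at the assembly stage: after splitting $L_p(\mu,X)$ into an $\ell_p$-sum of pieces $L_p(\nu_j,X)$, Corollary~\ref{cor:c0-lp-sum-easyinequality} only gives $n\bigl(L_p(\mu,X)\bigr)\leq\inf_j n\bigl(L_p(\nu_j,X)\bigr)$, so lower bounds on the summands cannot be ``assembled'' into a lower bound on the sum this way (the reversed inequality for $\ell_p$-sums with $1<p<\infty$ is not available). This flaw is easily repaired, and the repair is what the paper does: skip the decomposition and Maharam entirely for the lower bound, and apply Theorem~\ref{theorem:directedset} directly to $Z=L_p(\mu,X)$ with the directed set of all finite collections $\pi$ of pairwise disjoint finite-measure sets, $Z_\pi\equiv\ell_p^m(X)$ being the range of the conditional expectation associated to $\pi$; density of simple functions gives $Z=\overline{\bigcup_\pi Z_\pi}$ and hence $n\bigl(L_p(\mu,X)\bigr)\geq\inf_m n\bigl(\ell_p^m(X)\bigr)=n\bigl(\ell_p(X)\bigr)$ by part (a).
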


\begin{proof}[Proof of Proposition~\ref{prop:L_p-final}.a]
Since $\ell_p^m(X)$ is an absolute summand of $\ell_p^{m+1}(X)$,
the decrease of the sequence
$\bigl\{n(\ell_p^m(X))\bigr\}_{m\in\N}$ follows from
Corollary~\ref{cor:EMA}. By the same reason we have that
$n(\ell_p(X)) \leq n(\ell_p^m(X))$ for every $m\in \N$, and so
$n(\ell_p(X)) \leq \lim_{m\to\infty}  n(\ell_p^m(X))$ (cf.\ also
Corollary~\ref{cor:c0-lp-sum-easyinequality}). For the reversed
inequality, we just use Theorem~\ref{theorem:directedset} with
$Z=\ell_p(X)$, $I=\N$ with its natural order, and
$$
Z_m=\bigl\{x\in \ell_p(X)\ : \ x(k)=0 \ \forall k > m \bigr\}\equiv \ell_p^m(X).
$$
(Observe that not only $Z_m$ is one-complemented in $Z$ but,
actually, the natural projection is absolute. In this case, some
of the arguments in the proof of Theorem~\ref{theorem:directedset}
are simpler.)
\end{proof}

To proof item (b) in Proposition~\ref{prop:L_p-final} we need
the following lemma which is well known to experts, but since
we have not found any concrete reference in the literature, we
include a sketch of its proof for the sake of completeness. We
only need the (trivial) decomposition of every measure space
into disjoint finite measure spaces and Maharam's result on the
decomposition of finite measure algebras into disjoint
homogeneous parts.

\begin{lemma}\label{lemma:Maharam}
Let $(\Omega,\Sigma,\mu)$ be a measure space and $1 < p<\infty$
such that the space $L_p(\mu)$ is infinite-dimensional, and let
$X$ be any non-null Banach space. Then, there is a $\sigma$-finite
measure $\tau$ and a Banach space $Z$ such that $L_p(\tau,X)\neq
0$,
$$
L_p(\mu,X)\equiv L_p(\tau,X) \oplus_p Z \qquad
\text{and} \qquad L_p(\tau,X)\equiv \ell_p\bigl(L_p(\tau,X)\bigr)
\equiv L_p(\tau,\ell_p(X)).
$$
\end{lemma}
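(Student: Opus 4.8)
The goal is to decompose $L_p(\mu,X)$ into an $\ell_p$-sum of vector-valued $L_p$-spaces over finite measure spaces, and then to recognize inside this sum a piece which is self-similar under $\ell_p$-sums. The starting point is the (trivial) fact that every measure space $(\Omega,\Sigma,\mu)$ splits as a disjoint union $\Omega = \bigsqcup_{j\in J}\Omega_j$ of sets of finite measure (take a maximal disjoint family of atoms of finite measure together with a maximal disjoint family of non-atomic pieces of finite measure; one uses here that $L_p(\mu)$ is a K\"othe space in our sense, so that locally integrable functions over sets of finite measure are available). This gives
$$
L_p(\mu,X)\equiv \Bigl[\bigoplus_{j\in J} L_p(\mu_j,X)\Bigr]_{\ell_p},
$$
where $\mu_j$ is the restriction of $\mu$ to $\Omega_j$. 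Since $L_p(\mu)$ is infinite-dimensional, at least one of the following happens: either some $\mu_j$ is non-atomic with $\mu_j(\Omega_j)>0$, or infinitely many of the $\Omega_j$ are atoms (or both). In the second case one already gets an isometric copy of $\ell_p$ sitting as an $\ell_p$-summand, i.e. $L_p(\mu,X)\equiv \ell_p(X)\oplus_p Z'$ for a suitable $Z'$, and one can take $\tau$ to be the counting measure on $\N$, noting $\ell_p(X)\equiv\ell_p(\ell_p(X))\equiv\ell_p(\mathbb N,\ell_p(X))=L_p(\tau,\ell_p(X))$ and $\ell_p(X)\equiv L_p(\tau,X)$.

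First I would therefore concentrate on the non-atomic case. Suppose $(\Omega_{j_0},\mu_{j_0})$ is a non-atomic finite measure space. By Maharam's theorem, its measure algebra decomposes as a countable disjoint union of homogeneous pieces, each isomorphic (as a measure algebra, after normalizing) to the measure algebra of $\{0,1\}^{\kappa}$ for some infinite cardinal $\kappa$ with its product probability measure. Pick one such homogeneous piece $\Omega'$ of positive measure, of Maharam type $\kappa$. Then $L_p(\mu_{j_0},X)\equiv L_p(\mu',X)\oplus_p W$, where $\mu'$ is the (normalized) homogeneous measure on $\Omega'$. Now the key self-similarity: a homogeneous probability space of type $\kappa$ is measure-algebra isomorphic to its own countable product, $\{0,1\}^{\kappa}\equiv\bigl(\{0,1\}^{\kappa}\bigr)^{\N}$, because $\kappa\cdot\aleph_0=\kappa$. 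Writing $\tau$ for this homogeneous measure, one obtains the Fubini-type isometric identifications
$$
L_p(\tau,X)\equiv L_p\bigl(\tau^{\N},X\bigr)\equiv L_p\bigl(\tau,L_p(\tau,X)\bigr),
$$
and, decomposing the countable product coordinate-wise on a tail,
$$
L_p(\tau,X)\equiv \ell_p\bigl(L_p(\tau,X)\bigr)\equiv L_p(\tau,\ell_p(X)),
$$
the last identity again by Fubini together with $L_p(\tau,X)\equiv\ell_p(L_p(\tau,X))$ applied inside. Collecting the finitely (or countably) many summands peeled off along the way into a single Banach space $Z$ via a further $\ell_p$-sum, we arrive at $L_p(\mu,X)\equiv L_p(\tau,X)\oplus_p Z$ with $\tau$ $\sigma$-finite (in fact finite) and $L_p(\tau,X)\neq 0$ since $X\neq 0$ and $\tau(\Omega')>0$.

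The main obstacle is purely bookkeeping with Maharam's theorem: one must be careful that ``homogeneous of type $\kappa$'' is exactly the property that makes $\{0,1\}^\kappa\equiv(\{0,1\}^\kappa)^{\N}$ as measure algebras, and that this measure-algebra isomorphism lifts to an isometric isomorphism of the vector-valued $L_p$-spaces (it does, since an isomorphism of measure algebras induces an isometric lattice isomorphism of the scalar $L_p$-spaces, hence an isometry of the $L_p(\cdot,X)$ spaces, e.g. by its action on $X$-valued simple functions and density). One should also handle the degenerate sub-case where all homogeneous parts have finite Maharam type but there are infinitely many atoms or infinitely many homogeneous parts — this again produces an $\ell_p$-copy and is covered by the atomic discussion above. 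None of the steps is deep beyond Maharam's theorem itself; the content is the self-similarity identity $L_p(\tau,X)\equiv\ell_p(L_p(\tau,X))\equiv L_p(\tau,\ell_p(X))$, which is precisely what the subsequent application of Theorem~\ref{theorem:directedset} and Corollary~\ref{cor:c0-lp-sum-easyinequality} will need.
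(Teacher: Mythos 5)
Your argument is correct and follows essentially the same route as the paper's proof: decompose $\mu$ into finite-measure pieces to get an $\ell_p$-sum, split into the case where the atomic part already yields an $\ell_p(\Gamma,X)$ summand and the case where Maharam's theorem produces a homogeneous summand, and then use $\kappa\cdot\aleph_0=\kappa$ (self-similarity of the homogeneous measure algebra) together with associativity of the $\ell_p$-norm to obtain $L_p(\tau,X)\equiv \ell_p\bigl(L_p(\tau,X)\bigr)\equiv L_p(\tau,\ell_p(X))$. The only differences from the paper are cosmetic (you separate atoms before invoking Maharam, and use $\{0,1\}^{\kappa}$ in place of $[0,1]^{\omega_\alpha}$).
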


\begin{proof}
It is known that for every measure space $(\Omega,\Sigma,\mu)$, there is a
family of finite measure spaces $\{\nu_j\,:\, j\in J\}$ such that
$$
L_p(\mu,X)\equiv \Bigl[\bigoplus_{j\in J} L_p(\nu_j,X)\Bigr]_{\ell_p}.
$$
Indeed, it is enough to consider a maximal family $\{A_j\,:\,j\in
J\}$ of pairwise disjoint sets of positive finite measure and
$\nu_j$ is just the restriction of $\mu$ to $A_j$ (for $p=1$ and
$X=\K$ there is a proof of this fact in \cite[p.~501]{D-F} which
obviously extends to $1<p<\infty$ and arbitrary space $X$). If for
every $j\in J$, $L_p(\nu_j)$ is finite-dimensional, then
$L_p(\mu,X)\equiv \ell_p(\Gamma,X)$ for some infinite set
$\Gamma$, and the result clearly follows by just taking an
infinite and countable subset $\Gamma'$ of $\Gamma$,
$L_p(\tau,X)\equiv \ell_p(\Gamma',X)$ and
$Z=\ell_p(\Gamma\setminus \Gamma',X)$. Otherwise, pick
$\nu=\nu_{j_0}$ to be one of the finite measures $\nu_j$ such that
$L_p(\nu)$ is infinite-dimensional and observe that
$L_p(\mu,X)\equiv L_p(\nu,X)\oplus_p Z_1$ for some subspace $Z_1$
of $L_p(\mu,X)$.

Now, $\nu$ is a finite positive measure such that $L_p(\nu)$ is
infinite-dimensional and we may use Maharam's theorem
\cite[Theorems~7 and 8]{Lac} on measure algebras to deduce (as is
done in \cite[Chapter~5]{Lac} for the scalar case) that
$$
L_p(\nu,X)\equiv \ell_p(\Gamma,X)
\oplus_p \Bigl[\bigoplus_{\alpha \in I} L_p\bigl([0,1]^{\omega_\alpha},X\bigr)\Bigr]_{\ell_p}
$$
for some sets $\Gamma$ and $I$ ($I$ is a set of ordinals). Since
$L_p(\nu,X)$ is infinite-dimensional and $\nu$ is finite, $\Gamma$
is countably infinite or $I$ is nonempty. If $\Gamma$ is infinite,
just consider a measure $\tau$ such that $L_p(\tau,X)\equiv
\ell_p(\Gamma,X)$ and
$$
Z\equiv \Bigl[\bigoplus\limits_{\alpha
\in I} L_p\bigl([0,1]^{\omega_\alpha},X\bigr)\Bigr]_{\ell_p}\oplus_p
Z_1
$$
and the result clearly follows. Otherwise, $I\neq \emptyset$ and
we may take $\alpha_0 \in I$ and consider a measure $\tau$ such
that $L_p(\tau,X) \equiv
L_p\bigl([0,1]^{\omega_{\alpha_0}},X\bigr)$ and
$$
Z \equiv
\ell_p(\Gamma,X) \oplus_p
\Bigl[\bigoplus_{\alpha_0 \neq \alpha \in I}
L_p\bigl([0,1]^{\omega_\alpha},X\bigr)\Bigr]_{\ell_p} \oplus_p
Z_1.
$$
Then, $L_p(\mu,X)\equiv L_p(\tau,X) \oplus_p Z$ and
$$
L_p(\tau,X)\equiv \ell_p\bigl(L_p(\tau,X)\bigr)
$$
by the homogeneity of the measure $\tau$ (see \cite[pp.\ 122 and
128]{Lac}). Finally, it is straightforward to check that
$\ell_p\bigl(L_p(\tau,X)\bigr)\equiv L_p(\tau,\ell_p(X))$ by just
using the associativity of the $\ell_p$-norm.
\end{proof}

\begin{proof}[Proof of Proposition~\ref{prop:L_p-final}.b]
We use Lemma~\ref{lemma:Maharam} to write $L_p(\mu,X)\equiv
L_p(\tau,X)\oplus_p Z$ where the measure $\tau$ is $\sigma$-finite
and satisfies that $L_p(\tau,X)\equiv L_p(\tau,\ell_p(X))$. First,
we deduce from Corollary~\ref{cor:c0-lp-sum-easyinequality} that
$n\bigl(L_p(\mu,X)\bigr)\leq n\bigl(L_p(\tau,X)\bigr)$. On the
other hand, Corollary~\ref{cor:Lp-vector} gives us that
$n\bigl(L_p(\tau,\ell_p(X))\bigr) \leq n\bigl(\ell_p(X)\bigr)$.
Summarizing, we get
$$
n\bigl(L_p(\mu,X)\bigr)\leq n\bigl(L_p(\tau,X)\bigr)=n\bigl(L_p(\tau,\ell_p(X))\bigr)
 \leq n\bigl(\ell_p(X)\bigr).
$$
To get the reversed inequality, we argue in the same way that
we did in the proof of Corollary~\ref{cor:L_1muX-equality}. Let
$I$ be the family of all finite collections of pairwise
disjoint elements of $\Sigma$ with finite measure, ordered by
$\pi_1 \leq \pi_2$ if and only if each element in $\pi_1$ is a
union of elements in $\pi_2$. Then $I$ is a directed set. For
$\pi\in I$, we write $Z_\pi$ for the subspace of $Z=L_p(\mu,X)$
consisting of all simple functions supported in the elements of
$\pi$. Now, for every $\pi\in I$, the subspace $Z_\pi$ is
isometrically isomorphic to $\ell_p^m(X)$-space ($m$ is the
number of elements in $\pi$, see \cite[Lemma~II.2.1]{D-U} for
the case $p=1$ and a finite measure, but the proof is the same
in the general case), it is one-complemented by the conditional
expectation associated to the partition $\pi$ and, finally, the
density of simple functions on $L_p(\mu,X)$ gives that $Z=
\overline{\bigcup_{\pi\in I} Z_\pi}$. Then,
Theorem~\ref{theorem:directedset} applies and it follows that
\begin{equation*}
n\bigl(L_p(\mu,X)\bigr)\geq \limsup_{\pi\in I} n(Z_\pi)
\geq \inf_{m\in \N} n(\ell_p^m(X)) = n(\ell_p(X)).\qedhere
\end{equation*}
\end{proof}

\begin{remark}
Let us comment that the proof of
Proposition~\ref{prop:L_p-final}.b given here can be heavily
simplified for concrete measure spaces for which
Lemma~\ref{lemma:Maharam} can be avoided. For instance, the
fact that $n\bigl(L_p [0,1]\bigr)=n(\ell_p)$ follows
immediately from Corollary~\ref{cor:monotonic-basis} (using the
Haar system as monotone basis of $L_p[0,1]$) and
Corollary~\ref{cor:Lp-vector} (using that
$L_p\bigl([0,1],\ell_p\bigr)\equiv L_p[0,1]$).
\end{remark}

\end{document}